\newtheorem{theorem}{Theorem}[section]
\newtheorem{lemma}[theorem]{Lemma}
\newtheorem{corollary}[theorem]{Corollary}
\newtheorem{proposition}[theorem]{Proposition}
\newtheorem{observation}{Observation}
\newtheorem{conjecture}[theorem]{Conjecture}
\newtheorem{question}[theorem]{Question}
\noindent \emph{Proof.} {}{#1}{}}{$~$\hfill $~\blacklozenge$ \vspace{0.2cm}}
\definecolor{defblue}{rgb}{0.4,0,0.84}
\definecolor{greyblue}{rgb}{0.23,0.4,0.70}
\definecolor{orange}{rgb}{1.0,0.5,0.2}
\definecolor{violet}{rgb}{0.55,0,0.55}
\g@addto@macro{\UrlBreaks}{\UrlOrds}
\newcolumntype{Y}{>{\centering\arraybackslash}X}
\begin{document}

\title{{\bf Improper coloring of toroidal graphs}}

\author
{
	Alexandra~Kola\v ckovsk\'a \quad
	M\'aria~Macekov\'a \\
	Roman~Sot\'ak\thanks{Pavol Jozef \v Saf\'{a}rik University, Faculty of Science, Ko\v{s}ice, Slovakia.} \quad
	Diana~\v{S}vecov\'{a}\footnotemark[1] \quad
}

\maketitle

{
\begin{abstract}
A graph $G$ is called $(d_1,\dots,d_k)$-colorable if its vertices can be partitioned into $k$ sets $V_1,\dots,V_k$ such that $\Delta(\langle V_i\rangle_G)\leq d_i, i\in \{1,\dots, k\}$.
If $d_1 = \dots = d_k = m$ we say that $G$ is $k$-colorable with defect $m$.
A coloring with at least one $d_i, i\in \{1,\dots, k\}$, greater than $0$ is called an improper coloring.
It is known that toroidal graphs are properly $7$-colorable, therefore they are $7$-colorable with defect $0$.
It was also proved that toroidal graphs are $5$-colorable with defect $1$ and $3$-colorable with defect $2$.
The question whether they are $4$-colorable with defect $1$ remains open.

In this paper we focus on improper coloring of toroidal graphs with values of defects being not all equal.
We prove that these graphs are $(0,0,0,0,0,1^*)$-colorable, $(0,0,0,0,2)$-colorable and $(0,0,0,1^*,1^*)$-colorable (a star means that there is an improper coloring in which subgraph induced by the corresponding color class contains at most one edge).
Choi and Esperet in [Improper coloring of graphs on surfaces, J. Graph Theory $91(1)\,(2019), 16-34$] proved that every graph of Euler genus $eg > 0$ is $(0, 0, 0, 9eg - 4)$-colorable.
From this result it follows that toroidal graphs are $(0,0,0,14)$-colorable.
We decreased the value $14$ and proved that toroidal graphs are $(0,0,0,4)$-colorable.

We also show that all 6-regular toroidal graphs except $K_7$ and $T_{11}$ are $(0,0,0,1)$-colorable.
Finally, we discuss the colorability of graphs embeddable on $N_1$ and show that they are $(0,0,0,2)$-colorable.
\end{abstract}
}

\medskip
{\noindent\small \textbf{Keywords:} vertex coloring, improper coloring, defective coloring, embedded graph, toroidal graph}

\section{Introduction}
An \textit{improper coloring} of a graph $G$ (also known as \textit{defective coloring}) is a vertex coloring which allows adjacent vertices to receive the same color.
We say that $G$ is \textit{$(d_1, \dots , d_k )$-colorable} if each color class $V_i$ induces a subgraph of a maximum degree at most $d_i$, where $d_i$ is called a \textit{defect} of color class $V_i,\ i\in \{1,\dots, k\}$.
If all defects are equal to $m\in \mathbb{N}$, we say that $G$ is \textit{$k$-colorable with defect $m$}.
Note that proper vertex coloring has defects of all color classes being 0.

The Four Color Theorem~\cite{AppHak} implies that planar graphs are $(0,0,0,0)$-colorable.
When we decrease the number of colors from four to three, Cowen, Cowen, and Woodall~\cite{Cow} proved that every planar graph is $(2,2,2)$-colorable.
This result is the best possible as for each $k,\ k\geq 1$, there exists a planar graph which is not $(1,k,k)$-colorable (construction of such graph can be found in~\cite{ChoEsp}).
Furthermore, for any integer $k$ it is not difficult to construct a planar graph that is not $(k, k)$‐colorable (see~\cite{Cow}).

Many results have been obtained also for planar graphs with prescribed girth or bounded maximum average degree.
\v Skrekovski~\cite{Skr} proved that for any $k$ there is a triangle-free planar graph that is not $(k, k)$-colorable. 
%That shows that even in this class of graphs it is not possible to achieve results for improper coloring using two colors.
However, planar graphs with girth at least five are $(k_1, k_2)$-colorable for some positive integers $k_1, k_2$ (see for example~\cite{BorKosYan, ChoChoJeo, ChoRas}).
Note that some results were originally proved for graphs with bounded maximum average degree, but they hold also for planar graphs with girth at least $g$ (as they have maximum average degree less than $\frac{2g}{g-2}$, see~\cite{BorKosNes}). 

When trying to generalize the results regarding coloring of planar graphs, natural way is to take a closer look at coloring of graphs embeddable on surfaces of higher genus.
Since any toroidal graph is properly 7-colorable (see~\cite{Hea}), it is 7-colorable with defect~0.
Cowen and Goddard in~\cite{CowGod} proved that every toroidal graph is $(1,1,1,1,1)$- and (similarly as planar graphs) $(2,2,2)$-colorable. 
Question whether all toroidal graphs are $(1,1,1,1)$-colorable remains open (proposed in~\cite{CowGod}).

Regarding improper coloring of graphs embeddable on a surface of Euler genus $eg$, Choi and Esperet~\cite{ChoEsp} proved the following:
\begin{theorem}
\label{ChoEsp}
For each $eg > 0$, every graph embedded on a surface of Euler genus $eg$ is $(0, 0, 0, 9eg - 4)$-colorable and $(2, 2, 9eg - 4)$-colorable.
\end{theorem}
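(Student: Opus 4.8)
The plan is to split the argument according to the maximum degree of $G$: handle graphs of moderate maximum degree with a classical partition theorem, and absorb the few vertices of very large degree by induction together with a local recoloring step. The only structural input throughout is the sparsity forced by Euler's formula, and it is this sparsity that explains why a bound linear in $eg$ is the natural target.

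First I would record the sparsity. For a simple graph embedded on a surface of Euler genus $eg$, Euler's formula gives $|E(H)| \le 3|V(H)| - 6 + 3eg$ for every subgraph $H$ (with $|V(H)| \ge 3$), and in particular $\sum_{v}(\deg(v) - 6) \le 6eg - 12$. Thus the total excess of degree above $6$ is paid out of a budget of size linear in $eg$. The engine for the bounded-degree regime is Lovász's partition theorem: if $d_1 + \cdots + d_k \ge \Delta(G) - k + 1$ with the $d_i$ nonnegative integers, then $V(G)$ partitions into $V_1,\dots,V_k$ with $\Delta(G[V_i]) \le d_i$. Taking $(d_1,\dots,d_4) = (0,0,0,9eg-4)$ yields the desired $(0,0,0,9eg-4)$-coloring whenever $\Delta(G) \le 9eg - 1$, and taking $(d_1,d_2,d_3) = (2,2,9eg-4)$ yields the $(2,2,9eg-4)$-coloring whenever $\Delta(G) \le 9eg + 2$. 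So both statements hold outright for graphs below the relevant degree threshold, and only vertices of very high degree remain to be dealt with.

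To finish I would induct on $|V(G)|$ over all graphs of Euler genus at most $eg$, so that it suffices to treat a single vertex $v$ whose degree exceeds the Lovász threshold. Deleting $v$ and applying the inductive hypothesis colors $G - v$; the task is to reinsert $v$. Putting $v$ in an independent class needs a color of $\{1,2,3\}$ missing from $N(v)$, while putting $v$ in the large class needs at most $9eg - 4$ neighbours already there, with none of them saturated. Since $v$ has many neighbours, neither option is available in general, so one must recolor a bounded portion of $N(v)$ — for instance moving a few neighbours out of a chosen independent class into the large class — and here the budget $\sum_v(\deg(v) - 6) \le 6eg - 12$ is used to bound how many vertices must be moved and to certify that the large class can absorb them.

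The main obstacle is exactly this reinsertion step: controlling $\Delta(G[V_4])$ globally while keeping three classes independent. Degeneracy of the large class is easy to control (the greedy/Lovász machinery does it), but its \emph{maximum} degree is a non-local quantity, so reinserting a high-degree vertex may cascade through recolorings of its neighbours, and arranging for this reconfiguration to terminate with the precise constant $9eg - 4$ is the delicate point. I expect this constant to be far from tight — consistent with the paper's later improvement from $14$ to $4$ on the torus — so the argument should aim for a clean, deliberately wasteful discharging/reconfiguration bound rather than an optimal one.
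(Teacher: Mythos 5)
Your use of Lov\'asz's partition theorem is sound and the arithmetic checks out: it settles both statements whenever $\Delta(G)\le 9eg-1$ (respectively $\Delta(G)\le 9eg+2$). Everything after that, however, has a genuine gap, and the intuition meant to fill it is incorrect. The inequality $\sum_v(\deg(v)-6)\le 6eg-12$ does \emph{not} say that only ``a few vertices of very large degree'' remain: the summands are negative for $5^-$-vertices, so the bound is compatible with arbitrarily many vertices of arbitrarily large degree. For instance, take $K_7$ embedded on the torus and attach $n$ pendant leaves to each of its vertices; the excess sum is $-28n\le 6eg-12$, yet all seven branch vertices have degree $6+n$. Consequently the reinsertion step is not a bounded, local affair paid out of a linear budget. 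After coloring $G-v$ inductively, nothing prevents all three of colors $1,2,3$ from appearing on $N(v)$ while more than $9eg-4$ neighbours of $v$ lie in class $4$, some of them already saturated; moving a neighbour of $v$ out of class $4$ requires giving it one of colors $1,2,3$, which may be blocked at that vertex in exactly the same way, and so on. You supply no invariant or termination argument for this cascade --- you say yourself it is ``the delicate point'' --- and since this case is the entire content of the theorem, the proposal is a plan rather than a proof, with its guiding premise (that Euler-formula sparsity controls the recoloring) being precisely the part that fails.

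The actual proof, in the cited paper of Choi and Esperet~\cite{ChoEsp}, is not a degree dichotomy at all: it is an induction on the Euler genus driven by topology. One cuts along a shortest non-contractible cycle $C$, uses the fact that $C$ is induced and that every vertex of $G$ has at most $3$ neighbours on $C$ (this is exactly Observation~\ref{obs}, which the present paper borrows from~\cite{ChoEsp}), places the vertices arising from the cutting into the defective class, and bottoms out at planar graphs, where the Four Color Theorem supplies the three independent classes. The factor $9$ per unit of genus comes from the ``at most $3$ neighbours on a shortest non-contractible cycle'' bound applied a constant number of times per genus-reduction step, so the maximum degree of the defective class accumulates \emph{additively} over the at most $eg$ cutting steps --- which is exactly the kind of global control over $\Delta(\langle V_4\rangle_G)$ that a vertex-by-vertex induction cannot provide. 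The present paper's proof of Theorem~\ref{0004} is a sharpened single-step instance of the same cutting technique on the torus; if you want to repair your approach, that is the road: induct on the genus, not on the number of vertices.
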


In~\cite{ChoEsp} it was also shown that $9eg-4$ cannot be replaced by a sublinear function of~$eg$.
They also showed that every triangle-free graph embeddable on a surface of Euler genus $eg$ is $(0, 0,K_3(eg))$-colorable, $K_3(eg) = \lceil\frac{10eg+32}{3}\rceil$, where $K_3(eg)$ cannot be replaced by a sublinear function of $eg$.
Choi et al.~\cite{ChoChoJeo} proved that every graph with girth at least $5$ embeddable on a surface of Euler genus $eg$ is $(1,K_4(eg))$-colorable, $K_4(eg) = \max\{10, \lceil\frac{12eg+47}{7}\rceil\}$.
Finally, every graph of girth at least $7$ embeddable on a surface of Euler genus $eg$ is $(0, 5 +\lceil\sqrt{14eg + 22}\rceil)$-colorable (see~\cite{ChoEsp}). 
Authors in~\cite{ChoEsp} also showed that there is a constant $c > 0$ such that for infinitely many values of $eg$ there are graphs with girth at least 7 embeddable on a surface of Euler genus $eg$ with no $(0, \lfloor c\sqrt{eg}\rfloor)$-coloring.

In this paper we consider improper colorings where $d_i \neq d_j$ for at least one combination of $1 \leq i,j \leq k$.
%We proved tight results for improper coloring of toroidal graphs with six and five colors.
We prove that every toroidal graph is $(0,0,0,0,0,1)$-, $(0,0,0,0,2)$- and $(0,0,0,1,1)$-colorable and show some additional improvements of these results. 

Theorem \ref{ChoEsp} implies that every toroidal graph is $(0, 0, 0, 14)$-colorable.
Last value in the list of defects can be lowered and we show that every toroidal graph is $(0, 0, 0, 4)$-colorable.
It is easy to see that not every toroidal graph admits a $(0, 0, 0, 2)$-coloring (as $K_7$ is not $(0, 0, 0, 2)$-colorable).
However, it remains an open question whether or not every toroidal graph is $(0, 0, 0, 3)$-colorable.
We prove this to be true for toroidal graphs fulfilling some additional conditions.

For 6-regular toroidal graphs we show that they are $(0,0,0,1)$-colorable with only two exceptions being $K_7$ and a special triangulation $T_{11}$.

Finally, we prove that every graph embeddable on $N_1$ is $(0,0,0,2)$-colorable. 

\section{Basic notations}

In this paper we use a standard graph theory terminology according to books~\cite{BonMur} and~\cite{GroTuc}.
However, we introduce some more frequent terms.

All graphs in this paper are connected and simple, i.e. without loops and multiple edges.
Let $G$ be such a graph.
The \textit{degree} $\deg(v)$ of a vertex $v$ equals to the number of edges incident with $v$ in $G$. 
If $\deg(v)=k$ ($\deg(v)\le k, \deg(v)\ge k$), we say that $v$ is a \textit{$k$-vertex} (\textit{$k^-$-vertex}, \textit{$k^+$-vertex}).
We denote the set of vertices adjacent to $v$ as $N(v)$.
\textit{Girth} $g(G)$ of $G$ denotes the length of the shortest cycle in $G$.
\textit{Average degree} of $G$ is defined as $\mathrm{ad}(G) = \frac{2|E(G)|}{|V(G)|}$ and \textit{maximum average degree} of $G$ is $\mathrm{mad}(G) = \max\limits_{H\subseteq G} \mathrm{ad}(H)$.
We denote a subgraph of $G$ induced on $V'\subseteq V(G)$ as $\langle V'\rangle_G$. 

Graph $G$ is \textit{(properly) $k$-colorable} if there exists an assignment of colors $\{1, \dots, k\}$ to the vertices of $G$ such that adjacent vertices receive different colors.

Graph $G$ is called $(d_1, \dots, d_k)$\textit{-colorable} if the vertex set of $G$ can be partitioned into $k$ sets $V_1, \dots, V_k$ such that $\Delta(\langle V_i\rangle_G) \leq d_i,\ i\in\{1,\dots, k\}$. 
If $d_1 = \dots = d_k = m$, then we say that $G$ is \textit{$k$-colorable with defect $m$}.
An edge $e=uv$ is called \textit{monochromatic} in coloring $\varphi$ if both $u$ and $v$ receive the same color.
We say that graph $G$ is \textit{almost $k$-colorable} if there is a coloring $\varphi$ of $G$ such that at most one edge in $\varphi$ is monochromatic.
If $G$ is $(d_1, \dots, d_{i-1}, 1, d_{i+1} \dots, d_k)$-colorable in such a way that $\langle V_i\rangle_G$ contains at most one edge, we will mark the corresponding defect $d_i = 1$ with a star in the upper index, i.e., $G$ is $(d_1, \dots, d_{i-1}, 1^*, d_{i+1}, \dots, d_k)$-colorable.
Trivially, the following observation holds:

\begin{observation}
\label{cykly}
Every cycle is $(0,1^*)$-colorable and $(2)$-colorable.
\end{observation}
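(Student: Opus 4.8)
The plan is to handle the two claims separately, since they are independent and both elementary. For the $(2)$-colorability, the statement is immediate: assigning a single color to every vertex of a cycle $C_n$ produces one color class whose induced subgraph is $C_n$ itself, and since $\Delta(C_n)=2$, this is a valid $(2)$-coloring. I would dispose of this in a single sentence.

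For the $(0,1^*)$-colorability, my plan is to reduce the cycle to a path. Fix $C_n$ with vertices $v_1,v_2,\dots,v_n$ in cyclic order and delete the edge $v_nv_1$ to obtain the Hamiltonian path $P=v_1v_2\cdots v_n$. Since $P$ is bipartite, I would color $v_i$ with color $1$ when $i$ is odd and with color $2$ when $i$ is even; this is a proper $2$-coloring of $P$, so no edge of the path is monochromatic. Consequently, the only edge of $C_n$ that can possibly be monochromatic is the deleted edge $v_nv_1$.

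Next I would split by the parity of $n$. If $n$ is even, then $v_n$ gets color $2$ and $v_1$ gets color $1$, so $v_nv_1$ is bichromatic; the coloring is then a proper $2$-coloring, and in particular a $(0,1^*)$-coloring, since both classes are independent. If $n$ is odd, then $v_n$ and $v_1$ both receive color $1$, so the class of color $1$, namely $\set{v_1,v_3,\dots,v_n}$, induces exactly the single edge $v_nv_1$, while the class of color $2$, namely $\set{v_2,v_4,\dots,v_{n-1}}$, is independent. Taking the independent class as the defect-$0$ class and the one-edge class as the $1^*$-class yields the desired coloring.

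There is essentially no obstacle here; the only point deserving a moment's care is verifying the $1^*$ convention, i.e.\ that in the odd case the nontrivial class induces \emph{at most one edge} rather than merely having maximum degree $1$. This is exactly what the reduction guarantees: every edge of $C_n$ other than $v_nv_1$ joins vertices of different colors, so no second monochromatic edge can arise.
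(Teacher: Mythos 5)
Your proof is correct. The paper states this observation without proof, labeling it as trivial, and your argument (all-one-color for the $(2)$-coloring; alternating colors with at most one monochromatic edge closing an odd cycle for the $(0,1^*)$-coloring) is exactly the standard reasoning the paper implicitly relies on.
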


Let $\varphi$ be a $k$-coloring of $G$.
In the paper we use a notation $C_\varphi(S) = \{\varphi(v) : v \in S\}$ for $S\subseteq V(G)$.

\textit{Surface} is a connected 2-manifold which is closed, i.e. it is compact and its boundary is empty.
We consider two types of surfaces $-$ \textit{orientable} and \textit{non-orientable}.
The orientable surface $\mathbb{S}_k$ is obtained from a sphere by adding $k$ handles.
The non-orientable surface $\mathbb{N}_k$ is formed by adding $k$ \textit{cross-caps} (M\"{o}bius bands) to a sphere.
The \textit{genus} of a surface is equal to $k$ for both $\mathbb{S}_k$ and $\mathbb{N}_k$.
The \textit{Euler genus} ${\rm eg}(\mathcal{S})$ of a surface $\mathcal{S}$ equals twice its genus if $\mathcal{S}$ is orientable, and is equal to its genus if $\mathcal{S}$ is non-orientable.
The \textit{embedding} of a graph $G$ on a surface is defined as a continuous one-to-one function from a topological representation of $G$ into the surface.
For a given embedding of $G$ on a surface $\mathcal{S}$, the components of $\mathcal{S}\setminus G$ are called \textit{regions}. 
If each region is homeomorphic to an open disk, the embedding is called \textit{cellular} and the regions are called \textit{faces}.
Further in the paper, by embedding of a graph we mean a cellular embedding of this graph.
By Euler genus ${\rm eg}(G)$ of a graph $G$ embedded into a surface $\mathcal{S}$ we mean ${\rm eg}(\mathcal{S})$. 
For a graph which is embedded into a surface, the \textit{Euler formula} represents the sum
\begin{equation} \label{EulerFormula}
 |V| - |E| + |F|,
\end{equation}
and is equal to $2 - {\rm eg}(G)$, which equals 0 for toroidal graphs. 
It follows that for toroidal graphs the number of faces equals $|E| - |V|$. 

Let $G$ be a graph embedded on some surface and let $f$ be a face of $G$.
Then a \textit{boundary walk} of $f$ is the shortest walk consisting of edges and vertices of $G$ as they are reached when walking along the whole boundary of $f$, following some orientation of $f$ in the usual topological sense. 
The \textit{degree} $\deg(f)$ of $f$ is the length of its boundary walk.
Face $f$ with $\deg(f) = k$ is called a \textit{$k$-face}.
The sum of the degrees of all faces of $G$ equals twice the number of its edges, i.e. $\sum_{f\in F} \deg(f) = 2|E|$. 
Clearly, every face has degree at least $3$. 
If in a particular embedding of $G$ all faces have degree exactly three, we call this embedding a \textit{triangulation}.

A cycle $C$ of $G$ is called \textit{contractible} if one of the regions bounded by $C$ is homeomorphic to an open disc, otherwise $C$ is \textit{non-contractible}. 
Furthermore, $C$ is called \textit{separating} if $C$ separates a surface into two connected regions such that both contain at least one vertex or edge of $G$; otherwise $C$ is \textit{non-separating}.

\textit{Join} of graphs $G_1$ and $G_2$ is defined as a graph $G_1\vee G_2$ with a vertex set $V (G_1 \vee G_2) = V (G_1) \cup V (G_2)$ and an edge set $E(G_1 \vee G_2) = E(G_1) \cup E(G_2) \cup \{uv : u \in V (G_1), v \in V (G_2)\}$.

We define two special toroidal graphs.
Graph $H_7$ can be obtained by Haj\'os construction applied to two copies of $K_4$.
Graph $T_{11}$ is a special toroidal triangulation with 11 vertices (see Figure \ref{fig:H7,T11}).

\begin{figure}[h!]
				$$
					\includegraphics[scale=1]{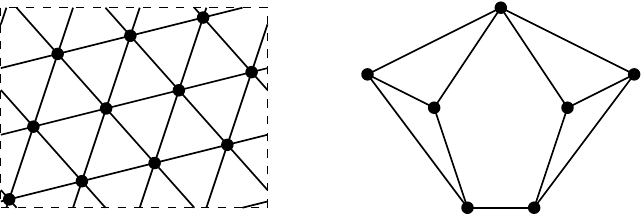}
				$$
				\caption{Graphs $T_{11}$ (left) and $H_7$ (right).}
				\label{fig:H7,T11}
\end{figure}	

Throughout the paper we will use the specific transformation of a toroidal graph $G$ into a planar graph $H$ described in~\cite{CowGodJes} that looks as follows.
Consider the shortest non-contractible cycle $C$ of $G$.
By cutting along $C$ we get two copies $C_1$ and $C_2$ of it, which we contract into two vertices.
Note that any multiedge that appears after contraction is replaced by a single edge.
The resulting graph $H$ is embeddable on a sphere, hence, it is planar.
We will denote this transformation $T(G,C)$.

\section{Improper coloring with 6 colors}
\label{6col}

Concerning proper coloring of toroidal graphs, there are graphs that need at least seven colors (as $K_7$ needs exactly seven colors).
If we want to decrease the number of colors to six, we need to increase at least one defect-value in general.
%at least one monochromatic edge will occur.

\begin{theorem}
Every toroidal graph is $(0,0,0,0,0,1^*)$-colorable.
\end{theorem}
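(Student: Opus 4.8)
The plan is to reduce the statement to the Four Color Theorem by cutting along a shortest non-contractible cycle and paying for the cut with two extra colors. First I would dispose of the degenerate case: if $G$ is planar, then the Four Color Theorem already yields a $(0,0,0,0)$-coloring, which is in particular a $(0,0,0,0,0,1^*)$-coloring. So I may assume $G$ is fixed with a cellular embedding on the torus in which not every cycle is contractible, and I choose a shortest non-contractible cycle $C = c_0 c_1 \cdots c_{k-1}$.

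The first structural step is to show that $C$ is induced. Suppose $c_i c_j$ were a chord of $C$; it splits $C$ into two arcs $P_1, P_2$, each of length at least $2$, and yields two cycles $D_1 = P_1 \cup \{c_ic_j\}$ and $D_2 = P_2 \cup \{c_ic_j\}$, both of length strictly less than $|C|$. Traversing $D_1$ and then $D_2$ is homotopic to $C$ (the chord is used once in each direction and cancels), so if both $D_1$ and $D_2$ were contractible then $C$ would be contractible. Hence at least one of them is a non-contractible cycle shorter than $C$, contradicting the minimality of $C$. Therefore the only edges inside $V(C)$ are the edges of $C$ itself.

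The second step colors everything outside $C$. Cutting the torus along the (necessarily non-separating) cycle $C$ produces a cylinder, so $G - V(C)$ is planar; equivalently, $G - V(C)$ sits as a subgraph inside the planar graph $H = T(G,C)$ supplied by the transformation recalled above. By the Four Color Theorem, $G - V(C)$ admits a proper coloring with the palette $\{1,2,3,4\}$, giving four color classes of defect $0$. For the cycle I would then use two fresh colors $\{5,6\}$: since $C$ is a cycle, Observation~\ref{cykly} provides a $(0,1^*)$-coloring of $C$, i.e. one class that is independent and one class spanning at most one edge of $C$. Because $C$ is induced, these are the only possible edges among vertices of $V(C)$, so inside $\langle V(C)\rangle_G$ the two classes indeed have defect $0$ and at most one edge, respectively. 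Every edge joining $V(C)$ to $V(G)\setminus V(C)$ is automatically bichromatic, as the palettes $\{1,2,3,4\}$ and $\{5,6\}$ are disjoint. Collecting the four planar classes, the independent cycle-class, and the cycle-class spanning at most one edge gives five classes of defect $0$ and one class inducing at most one edge, that is, a $(0,0,0,0,0,1^*)$-coloring (and in fact a proper $6$-coloring when $|C|$ is even).

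I expect the main obstacle to be the two structural facts underlying the reduction: that a shortest non-contractible cycle $C$ is induced, and that $G - V(C)$ is planar. Once these are secured, the coloring is essentially forced — the disjointness of the two palettes handles all edges incident to $C$ for free, and Observation~\ref{cykly} together with the induced-ness of $C$ confines the single monochromatic edge to one color class. A minor point to verify in advance is the existence of a non-contractible cycle in the chosen embedding, which is why the planar case is separated out at the start.
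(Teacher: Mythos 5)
Your proof is correct and follows essentially the same route as the paper's: cut along a shortest non-contractible cycle $C$, apply the Four Color Theorem to the planar remainder, and spend two fresh colors on $C$ via Observation~\ref{cykly}. The only differences are cosmetic refinements — you explicitly prove that $C$ is induced (the paper relies on this implicitly, citing it only later as Observation~\ref{obs}) and you color $G - V(C)$ directly rather than restricting a $4$-coloring of the contracted planar graph $T(G,C)$.
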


\begin{proof}
%We may assume that $G$ is a maximal toroidal graph, and 
We transform toroidal graph $G$ into a planar graph $H$ using the transformation $T(G,C)$, where $C$ is a shortest non-contractible cycle in $G$. 
Let $u$ and $v$ be the vertices that represent the contracted copies of $C$.
Graph $H$ is planar, therefore it is properly $4$-colorable. 
Let $\varphi$ be a proper 4-coloring of $H$.
We define $(0,0,0,0,0,1^*)$-coloring $\varphi'$ of $G$ in the folowing way: for all vertices $x\in V(G\setminus C)$ let $\varphi'(x) = \varphi(x)$.
For $x\in V(C)$ we will use colors $5$ and $6$, using Observation \ref{cykly}. 
If $\varphi'$ contains a monochromatic edge, let $6$ be the color used on its end-vertices.
%Vertices colored with $1,2,3,4,5$ induce independent sets and there are at most two adjacent vertices colored with $6$.
Hence it follows that $G$ is $(0,0,0,0,0,1^*)$-colorable.
As the complete graph $K_7$ is not properly $6$-colorable, the result is the best possible.
\end{proof}

\section{Improper coloring with 5 colors}
\label{5col}

Considering improper colorings of toroidal graphs that use five colors, it is not difficult to see that $K_7$ is not $(0,0,0,0,1)$-colorable (otherwise there are three vertices colored with color 5 inducing $C_3$ in $K_7$).
Therefore, Theorem \ref{00002} is the best possible as the value 2 in the list cannot be decreased.

\begin{theorem} \label{00002}
Every toroidal graph is $(0,0,0,0,2)$-colorable.
\end{theorem}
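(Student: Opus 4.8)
The plan is to mimic the proof of the previous theorem, but to economize on colors by devoting a single color of positive defect to the whole non-contractible cycle instead of splitting it across two fresh colors. First I would apply the transformation $T(G,C)$ to the shortest non-contractible cycle $C$ of $G$, obtaining a planar graph $H$ whose vertices are $V(G)\setminus V(C)$ together with the two contracted vertices $u,v$. By the Four Color Theorem, $H$ admits a proper coloring $\varphi$ using colors $\{1,2,3,4\}$.

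Next I would define a coloring $\varphi'$ of $G$ by setting $\varphi'(x)=\varphi(x)$ for every $x\in V(G)\setminus V(C)$, and $\varphi'(x)=5$ for every $x\in V(C)$; the colors of $u$ and $v$ under $\varphi$ are simply discarded. I would then check the defect conditions. Each class $1,2,3,4$ is contained in $V(G)\setminus V(C)$, and since every edge of $G$ joining two vertices of $V(G)\setminus V(C)$ survives the transformation and is therefore present in $H$, the restriction of $\varphi$ to these vertices is proper; hence classes $1$ through $4$ are independent (defect $0$). Every edge joining a vertex of $C$ to a vertex outside $C$ is non-monochromatic, its endpoints receiving a color in $\{1,2,3,4\}$ and the color $5$, respectively, so such edges contribute to no color class.

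The crux is to control class $5$, which equals $V(C)$ and hence induces $\langle V(C)\rangle_G$. The key step is to observe that $C$ is induced: if $C$ had a chord $xy$, then $xy$ together with each of the two arcs of $C$ between $x$ and $y$ would form a cycle, and the concatenation of these two cycles (along which the chord is traversed once in each direction and cancels) is homotopic to $C$; thus at least one of them is non-contractible, for otherwise $C$ would be contractible. Since $G$ is simple, $x$ and $y$ are non-consecutive on $C$, so both arcs have length at least two, making that non-contractible cycle strictly shorter than $C$ and contradicting the minimality of $C$. Consequently $\langle V(C)\rangle_G = C$ has maximum degree $2$, so by Observation \ref{cykly} class $5$ has defect $2$, and $\varphi'$ is the desired $(0,0,0,0,2)$-coloring.

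I expect the only genuinely delicate point to be the chordlessness of $C$; everything else reduces to bookkeeping about which edges survive $T(G,C)$. In particular, one must be sure that discarding the colors of the contracted vertices $u,v$ creates no conflict, which it does not precisely because all of $V(C)$ is recolored with the single fresh color $5$.
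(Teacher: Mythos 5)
Your proof is correct and takes essentially the same approach as the paper: apply $T(G,C)$ to a shortest non-contractible cycle, properly $4$-color the resulting planar graph, and assign all of $V(C)$ a fifth color whose class induces only the chordless cycle $C$, which has maximum degree $2$. The sole difference is cosmetic: you prove the chordlessness of $C$ directly via the homotopy/shortcut argument, whereas the paper simply invokes the known observation of Choi and Esperet (Observation \ref{obs}).
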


\begin{proof}
Let $G$ be a toroidal graph, $C$ be a shortest non-contractible cycle in $G$. 
We again use the transformation $T(G,C)$ for $G$ and let $H$ be a resulting planar graph in which $u, v$ correspond to $C$. 
Let $\varphi$ be an arbitrary 4-coloring of $H$.
We can color vertices of $G$ in such a way that all vertices from $\langle H\setminus \{u,v\}\rangle_G$ receive the same color as in $\varphi$ and remaining vertices (i.e. vertices belonging to cycle $C$) receive a new color 5. 
Then the resulting coloring is a $(0,0,0,0,2)$-coloring of $G$ (using Observation \ref{obs} and the fact that $C$ is chordless).
\end{proof}

Natural question is whether we can lower the value 2 in Theorem \ref{00002} in case we allow more than one non-zero defect.
The answer is positive, but we need the following result:

\begin{theorem}[Thomassen~\cite{Tho}]
\label{thomassen}
Let $G$ be a toroidal graph. Then $G$ is $5$-colorable if and only if $G$ does not contain $K_6$ or $C_3\vee C_5$ or $K_2\vee H_7$ or $T_{11}$.
\end{theorem}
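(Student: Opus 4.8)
The statement has two directions, and I would treat them separately. The ``only if'' direction is routine: it suffices to observe that each of the four listed graphs has chromatic number exactly $6$, so any graph containing one of them as a subgraph cannot be properly $5$-colored. Indeed $\chi(K_6)=6$; for the join every vertex of the triangle is adjacent to every vertex of the $5$-cycle, so $\chi(C_3\vee C_5)=\chi(C_3)+\chi(C_5)=3+3=6$; the graph $H_7$ built by the Haj\'os construction on two copies of $K_4$ has $\chi(H_7)=4$ (the Haj\'os join preserves the chromatic number of $K_4$), whence $\chi(K_2\vee H_7)=2+4=6$; and $\chi(T_{11})=6$ can be verified directly from its description. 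This gives the forward implication immediately.

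The substance of the theorem is the converse: every toroidal graph that contains none of the four graphs is $5$-colorable. The plan is to argue contrapositively and \emph{classify the obstructions}. Suppose $G$ is toroidal with $\chi(G)\ge 6$; passing to a minimal non-$5$-colorable subgraph (which is again toroidal) I may assume $G$ is $6$-critical, so it suffices to prove that the only $6$-critical toroidal graphs are $K_6$, $C_3\vee C_5$, $K_2\vee H_7$ and $T_{11}$. I would first collect the standard properties of a $6$-critical graph: $\delta(G)\ge 5$, $G$ is $2$-connected, and $G$ has no clique cutset. Combining $\delta(G)\ge 5$ with Euler's formula on the torus (where $|V|-|E|+|F|=0$ forces $|E|\le 3|V|$, with equality for triangulations) shows that $G$ is close to a triangulation and that high-degree vertices are scarce. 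Assigning charge $\deg(v)-6$ to vertices and $2\deg(f)-6$ to faces, the total charge is exactly $0$ on the torus; this \emph{flat combinatorial curvature} is precisely what makes the torus the borderline case, producing finitely many critical graphs rather than none, and it is the entry point for a discharging analysis.

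The decisive dichotomy is on the edge-width of the embedding, i.e.\ the length of a shortest non-contractible cycle. When the edge-width is large the graph is ``locally planar'', and here I would use a cutting-and-precoloring-extension argument: cut the torus along a shortest non-contractible cycle (the transformation $T(G,C)$ used earlier in this paper) to obtain a planar graph carrying two precolored boundary components, and show that a $5$-coloring of the interior extends across the cut, contradicting $6$-criticality. This step genuinely uses that planar graphs are $4$-colorable, so there is slack to spare. When the edge-width is small, the neighbourhoods of the short non-contractible cycles are tightly constrained, and the problem collapses to a finite enumeration of near-triangulations on few vertices, out of which the four exceptional graphs emerge.

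I expect the main obstacle to be exactly this edge-width dichotomy. The locally planar case is delicate because extending a precoloring across the cut is not automatic: one must control the colors forced on the two copies of the cutting cycle and eliminate the bad patterns by reducible-configuration arguments, and this is where the discharging inequalities must be made tight against the zero total charge. The small edge-width case, although finite, demands a careful and lengthy case analysis to guarantee that no fifth critical graph is overlooked; making that enumeration provably exhaustive is the real work hidden behind the clean statement.
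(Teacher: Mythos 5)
This theorem is not proved in the paper at all: it is imported from Thomassen's work~\cite{Tho} as a black box (the paper only restates it, together with Proposition~\ref{thomassen.proposition}, as tools for its own results), so there is no in-paper argument to compare yours against. Judged on its own merits, your ``only if'' direction is correct and essentially complete: $\chi(K_6)=6$, $\chi(C_3\vee C_5)=\chi(C_3)+\chi(C_5)=6$, $\chi(K_2\vee H_7)=2+\chi(H_7)=6$ (the Haj\'os join of two copies of $K_4$ is $4$-chromatic), and $\chi(T_{11})=6$ is a finite verification; hence any graph containing one of the four as a subgraph is not $5$-colorable.

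The converse is where the entire theorem lives, and there your proposal is a research plan rather than a proof, with two pivotal claims asserted but not established. First, in the small edge-width case you claim the problem ``collapses to a finite enumeration of near-triangulations on few vertices.'' This does not follow: toroidal graphs of edge-width $3$ can have arbitrarily many vertices, and the finiteness of the set of $6$-critical graphs embeddable on the torus is not an a priori fact but is essentially the content of the theorem being proved (finiteness of $6$-critical graphs on a general fixed surface is itself a deep, separate theorem of Thomassen); assuming a vertex bound here is circular. Second, in the locally planar case, extending a precoloring of the two copies of the cutting cycle across the planar piece is exactly the hard technical content --- this is what lemmas of the type of Proposition~\ref{thomassen.proposition} are for --- and your sketch only says the bad patterns must be ``eliminated by reducible-configuration arguments'' without exhibiting a single reduction. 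Your reduction to $6$-critical graphs, the degree and connectivity properties, and the zero-total-charge discharging setup on the torus are all correct and do mirror the genuine architecture of Thomassen's argument, but the decisive steps --- a provably exhaustive enumeration and actually proved extension lemmas --- are precisely what you defer, so the proposal does not constitute a proof of the classification.
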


The proof of Theorem \ref{thomassen} uses the following proposition that we use in proofs of Theorem \ref{00001} and Theorem \ref{00011}.

\begin{proposition}[Thomassen~\cite{Tho}]
\label{thomassen.proposition}
Let $G$ be a planar graph with outer cycle $S = \{x_1, x_2,$ $\dots, x_k, x_1\}, k \leq 6$. Let $\varphi$ be a $5$-coloring of $S$. 
Then $\varphi$ can be extended to a $5$-coloring of $G$ if and only if none of $(i), (ii), (iii)$ below holds:
\begin{enumerate}
\item[(i)] $|C_{\varphi}(S)| = 5$ and $G$ has a vertex $v$ adjacent to the vertices of every color of $C_{\varphi}(S)$.
\item[(ii)] $k = 6,\ |C_{\varphi}(S)| = 4,\ G \setminus S$ contains two adjacent vertices $u, v$, each of them joined to the vertices of all four colors of $C_{\varphi}(S)$.
\item[(iii)] $k = 6,\ |C_{\varphi}(S)| = 3,\ G \setminus S$ contains three pairwise adjacent vertices $u, v, w$, each of them is joined to the vertices of all three colors of $C_{\varphi}(S)$.
\end{enumerate}

\end{proposition}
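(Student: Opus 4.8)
The plan is to prove the two implications separately: the forward direction --- that each of $(i)$--$(iii)$ forbids an extension --- is immediate, while the backward direction carries all the difficulty. For necessity, each listed configuration leaves some subgraph uncolourable. Under $(i)$, the common neighbour $v\notin S$ sees boundary vertices realising all five colours of $C_\varphi(S)$, so no colour remains for $v$. Under $(ii)$, each of the adjacent vertices $u,v\in G\setminus S$ sees all four colours of $C_\varphi(S)$ and is hence forced onto the unique fifth colour, contradicting $uv\in E(G)$. Under $(iii)$, each of the pairwise adjacent $u,v,w$ is confined to the two colours outside $C_\varphi(S)$, so the triangle $uvw$ would require three colours from a palette of size two. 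Thus any of $(i)$--$(iii)$ blocks the extension.

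For sufficiency I would assume none of $(i)$--$(iii)$ holds and build an extension by induction on $|V(G)|$. First I would make the usual structural reductions: a cut-vertex/block decomposition reduces to the $2$-connected case (pendant blocks meet the rest at a single vertex and extend greedily), after which I may take $G$ to be a near-triangulation with boundary walk $S$, verifying in each step that the no-obstruction hypothesis is preserved. If $S$ has a chord $x_ix_j$, it splits $G$ into two near-triangulations whose outer cycles --- formed from $S$ together with the chord --- are both strictly shorter than $S$ and agree on the precoloured edge $x_ix_j$; I would colour each by induction. Here the shortened boundaries kill most obstructions outright, since $(ii)$ and $(iii)$ require $k=6$, and I would check that any surviving type-$(i)$ obstruction in a piece lifts to an obstruction of $G$, contradicting the hypothesis.

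The genuinely hard case is $S$ a chordless $k$-cycle, $k\le 6$, bounding a $2$-connected near-triangulation. Here I would use Thomassen's deletion move: the interior neighbours of $x_1$ form a path $y_1,\dots,y_m$ joining $x_2$ to $x_k$, and $G-x_1$ is a smaller near-triangulation in which $x_1$ is replaced on the boundary by $y_1,\dots,y_m$. To recurse I must colour the $y_i$ while avoiding $\varphi(x_1)$, which forces me to strengthen the induction hypothesis to a list-colouring form (boundary vertices carrying lists of size at least three, two adjacent ones precoloured, interior vertices lists of size at least five), in the spirit of Thomassen. Specialising that stronger statement to fully precoloured cycles of length at most six then isolates the finitely many tight configurations, and these are precisely the ones described by $(i)$--$(iii)$.

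The main obstacle is exactly this bookkeeping in the sufficiency direction. The obstructions are pinned to particular values of $k$ and particular sizes of $C_\varphi(S)$, so each chord-split or vertex-deletion changes which obstructions are even definable, and one must argue that whenever a reduced instance is obstructed the original instance already was. The densest sub-case --- a chordless $6$-cycle with several interior vertices each adjacent to almost all of $S$ --- is exactly where types $(ii)$ and $(iii)$ live, and separating the colourable configurations from the three forbidden ones requires a careful finite inspection of the colour patterns on a cycle of length at most six rather than any single clever trick.
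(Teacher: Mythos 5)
First, note that the paper itself offers no proof of this proposition: it is quoted from Thomassen~\cite{Tho} and used purely as a black box in the proofs of Theorems~\ref{00001} and~\ref{00011}. So your attempt must be judged on its own merits. Your necessity argument is correct and essentially complete (with the minor caveat that in case $(i)$ the vertex $v$ is not required to lie outside $S$; if $v\in S$ the hypothesis forces a monochromatic chord, and the extension fails just as trivially).

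The sufficiency direction, however, contains a genuine gap, and it sits exactly where you place the main difficulty. Your plan is to strengthen the induction to a Thomassen-style list-colouring statement (outer vertices with lists of size at least $3$, two adjacent precoloured vertices, interior lists of size $5$) and then to ``specialise that stronger statement to fully precoloured cycles of length at most six''. This specialisation does not exist: a fully precoloured $6$-cycle amounts to six boundary vertices with singleton lists, which violates the hypotheses of the choosability theorem, and no unconditional extension statement can hold in that regime --- the obstructions $(i)$--$(iii)$ themselves are counterexamples to any such statement. So the result you intend to specialise is either inapplicable or false, and the entire content of the proposition --- that $(i)$--$(iii)$ are the \emph{only} obstructions --- is left unproved. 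The concluding ``careful finite inspection of the colour patterns'' cannot substitute for it: the interior of the $6$-cycle is an arbitrary near-triangulation with arbitrarily many vertices, so the case analysis is not finite, and one must instead show that the chord-splitting and vertex-deletion recursions preserve the no-obstruction hypothesis at every step (your deletion step also silently leaves the fully-precoloured setting, since the new boundary vertices $y_1,\dots,y_m$ are uncoloured, which is precisely why you reached for lists). This propagation argument is the substance of Thomassen's proof in~\cite{Tho}, which occupies several pages of case analysis; as written, your proposal establishes only the easy direction.
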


\begin{theorem}
\label{00001}
Every toroidal graph not containing $K_7$ as a subgraph is $(0,0,0,0,1^*)$-colorable.
\end{theorem}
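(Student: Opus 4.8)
\emph{Proof proposal.} The plan is to reduce the statement to Thomassen's characterisation (\cref{thomassen}) through the following observation: it suffices to find a \emph{single} edge $e$ of $G$ such that $G-e$ is properly $5$-colorable. Indeed, suppose such $e=xy$ exists. If $G$ itself is properly $5$-colorable we are done with no monochromatic edge, so assume it is not. Then in \emph{every} proper $5$-coloring $\varphi$ of $G-e$ the vertices $x$ and $y$ must receive the same color, since otherwise $\varphi$ would already be a proper $5$-coloring of $G$. Hence $\varphi$ is a $5$-coloring of $G$ whose only monochromatic edge is $e$: the color class containing $x$ and $y$ induces exactly one edge and the other four classes are independent, which is precisely a $(0,0,0,0,1^*)$-coloring. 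So the whole problem becomes: \textbf{find one edge whose deletion makes $G$ properly $5$-colorable.}

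Assuming $G$ is not $5$-colorable, \cref{thomassen} guarantees that $G$ contains one of $K_6$, $C_3\vee C_5$, $K_2\vee H_7$, or $T_{11}$ as a subgraph; call it $F$. For each type I would first record a local edge $e_F\in E(F)$ whose removal drops the chromatic number of $F$ to $5$: any edge works for $K_6$; an edge of the $C_5$ works for $C_3\vee C_5$, turning it into $C_3\vee P_5$; an edge of $H_7$ works for $K_2\vee H_7$, since the Haj\'os graph $H_7$ is $4$-critical and hence becomes $3$-colorable after deleting any edge; and a suitable edge, read off from the explicit structure of $T_{11}$, works there as well. Thus $e_F$ destroys the obstruction we started from.

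The substance of the argument is then to show that $G-e_F$ contains \emph{none} of the four forbidden subgraphs, so that \cref{thomassen} certifies $G-e_F$ to be $5$-colorable. Here the two standing hypotheses—that $G$ is toroidal and $K_7$-free—are exactly what is needed. The genus-$1$ constraint, via Euler's formula and face counting, forbids two almost edge-disjoint copies of these dense obstructions from coexisting, while $K_7$-freeness forces distinct copies to overlap in a common clique edge: two copies of $K_6$ sharing five vertices would create a $K_7$ unless their private vertices are non-adjacent, in which case the copies share all ten edges of the common $K_5$ and $e_F$ can be taken there. To carry this out uniformly, and to handle the boundary case where the shortest non-contractible cycle $C$ is short, I would invoke the cut transformation $T(G,C)$ together with \cref{thomassen.proposition}: after deleting $e_F$ one colors the planar graph obtained from $G-e_F$ and extends the coloring across the cut, the blocking configurations $(i)$--$(iii)$ of \cref{thomassen.proposition} being precisely the reappearing forbidden subgraphs, which $K_7$-freeness excludes. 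When $C$ is long the graph is locally planar and $5$-colorable outright, so no deletion is needed.

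The main obstacle I anticipate is exactly this last step: proving that one edge deletion simultaneously destroys \emph{all} obstructions of \cref{thomassen}, equivalently that in a toroidal $K_7$-free graph they all pass through a common edge. The clean cases are two copies of the same type; the delicate part is controlling how obstructions of \emph{different} types interleave, and treating the copies whose shared part is as large as a $K_5$, where the no-$K_7$ hypothesis must be used at full strength. I expect the verification to split into a short locally-planar case and a finite case analysis driven by \cref{thomassen.proposition}.
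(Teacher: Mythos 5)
Your opening reduction is sound but essentially tautological: a $(0,0,0,0,1^*)$-coloring of $G$ is \emph{by definition} a proper $5$-coloring of $G-e$ for some edge $e$, so restating the theorem as ``find one edge whose deletion makes $G$ properly $5$-colorable'' transfers all of the difficulty into that single step. By \cref{thomassen}, that step is equivalent to showing that in a toroidal, $K_7$-free, non-$5$-colorable graph there is one edge lying in \emph{every} copy of \emph{every} obstruction from $\{K_6,\, C_3\vee C_5,\, K_2\vee H_7,\, T_{11}\}$. This is exactly the claim you flag as ``the main obstacle,'' and it is left unproved. Pairwise intersection arguments (two $K_6$'s sharing a $K_5$, say) do not suffice: pairwise-shared edges need not yield a single edge common to all copies, copies of different types can a priori interleave, and controlling how a second obstruction can sit relative to the faces of a first one requires care with facial walks on the torus (a face's closure need not be a disk with each boundary vertex appearing once, so ``a non-planar graph cannot sit inside a face'' is not automatic). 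None of this case analysis is carried out, so the proposal is a plan rather than a proof.

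Moreover, the tools you propose for closing the gap are misapplied. \cref{thomassen.proposition} is an extension result: it extends a precoloring of the \emph{outer facial cycle of a planar graph} to its interior, and its obstructions $(i)$--$(iii)$ are vertices or small cliques adjacent to all precolored colors; they are not ``precisely the reappearing forbidden subgraphs'' of \cref{thomassen}, and the proposition cannot be combined with the cut transformation $T(G,C)$ to ``extend a coloring across the cut'' — after cutting, the two copies of $C$ must receive consistent colors, which the proposition does not provide. The paper's proof sidesteps the global common-edge question entirely: it observes that each obstruction $H$, once embedded, (nearly) triangulates the torus, so every vertex of $G\setminus H$ lies inside a face of $H$ of degree at most $6$; it then exhibits an explicit $(0,0,0,0,1^*)$-coloring of $H$ (placing the monochromatic edge carefully, and subdividing or contracting it), and uses \cref{thomassen.proposition} \emph{locally}, face by face, to color the planar interiors — with $K_7$-freeness invoked only to rule out obstruction $(i)$ for a $6$-face of $K_6$. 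If you want to pursue your route, the honest workload is the full classification of how obstruction copies can intersect in a toroidal $K_7$-free graph; as it stands, that is a genuine gap.
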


\begin{proof}
As by Theorem~\ref{thomassen} every graph not containing $H \in Z = \{K_6, C_3\vee C_5, K_2\vee H_7, T_{11}\}$ is $(0,0,0,0,0)$-colorable, it suffices to show that any graph $G$ containing $H \in Z$ as a subgraph and not containing $K_7$ is $(0,0,0,0,1^*)$-colorable.

First we show that any embedding of graph from $Z\setminus\{K_6\}$ into torus has maximum face degree at most four.

For embedding of $C_3\vee C_5$ it holds that $|V|=8,\ |E|= 3+5+3\cdot5 = 23$, therefore $|F| = 15$ (using Euler's formula (\ref{EulerFormula})).
It follows that $45 = 3\cdot15 \leq \sum_{f\in F} \deg(f) = 2|E| = 2\cdot23 = 46$, which implies that in any embedding of $C_3 \vee C_5$ exactly one face has degree four, all the others have degree three.

Similarly, for $K_2 \vee H_7$ we have $|V| = 9,\ |E|=26,\ |F| = 17$.
Then $51 = 3\cdot17 \leq \sum_{f\in F} \deg(f) = 2|E| = 2\cdot26 = 52$ and again there is exactly one $4$-face in any toroidal embedding of $K_2 \vee H_7$.

Finally, $T_{11}$ is a triangulation, therefore in any toroidal embedding of this graph all faces have degree $3$.

Let $G$ be an arbitrary toroidal graph containing $H \in Z \setminus \{K_6\}$ as a subgraph and not containing $K_7$. 
The graph $H$ is $(0,0,0,0,1^*)$-colorable, since there exists such a coloring $\varphi$ of a graph $H$ (see the right side of Figure \ref{fig:(0,0,0,0,1)-coloring}).
Namely, in $C_3\vee C_5$ we color $C_3$ with three distinct colors and $C_5$ with two distinct colors.% in such a way that the monochromatic edge is incident with 3-faces.
In $K_2\vee H_7$ we color vertices of $H_7$ properly with four colors and vertices of $K_2$ with the fifth color.
%This monochromatic edge can be incident either with two 3-faces or with 3-face and 4-face.
%Lastly, $T_{11}$ is a triangulation and again monochromatic edge is incident with two $3$-faces in any $(0,0,0,0,1^*)$-coloring.
Recall that in each $(0,0,0,0,1^*)$-coloring of graph $H$ a monochromatic edge is incident with a $3$-face and a $4^-$-face.

\begin{figure}[h!]
				$$
					\includegraphics[scale=1.2]{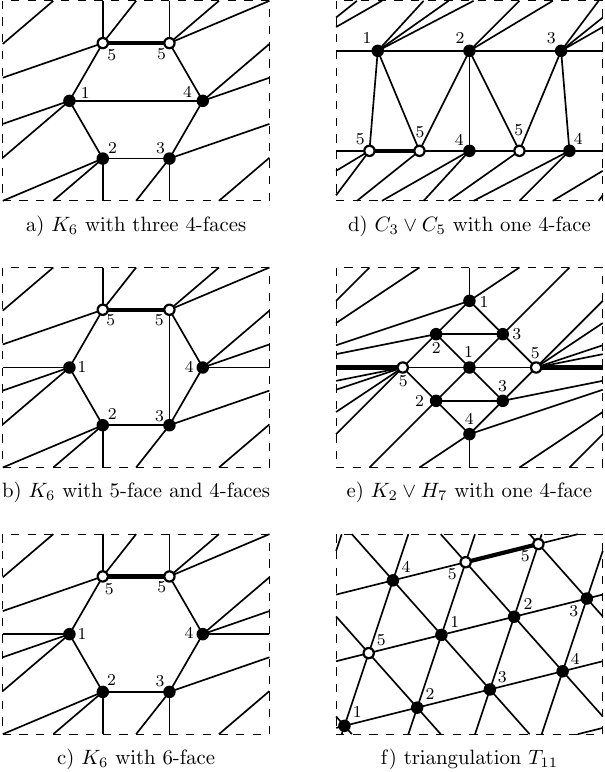}
				$$
				\caption{$(0,0,0,0,1^*)$-colorings of embedded graphs from $Z$. Only the fat edges are monochromatic.}
				\label{fig:(0,0,0,0,1)-coloring}
\end{figure}	

%In any embedding of $G$ we can apply this coloring on the corresponding vertices of $G$.
We can apply this $(0,0,0,0,1^*)$-coloring on the vertices that correspond to $H$ in $G$. Our aim is to show that this partial coloring $\varphi$ can be extended to the required coloring of the whole graph $G$.

%Now consider an arbitrary cycle $S$, which creates a boundary of a face $\alpha$ of the subgraph $H$.
%If $S$ contains a monochromatic edge, we will subdivide it by adding a $2$-vertex $v_S$ on it, which can be colored by one of the four remaining colors.
%The interior of any contractible cycle in $G$ induce a planar subgraph.
%It implies that, according to Proposition \ref{thomassen.proposition}, we can properly color the vertices inside of $S$ with five colors, as after possible subdivision of monochromatic edge of the subgraph $H$ the resulting toroidal embedding has all faces of degree at most five and no forbidden configuration appears.
%Thus, $G$ is $(0,0,0,0,1^*)$-colorable, where exactly one edge is monochromatic (as defined earlier).

Note that every vertex of $V(G) \setminus V(H)$ belongs to an interior of some face of $H$. First we subdivide the monochromatic edge $e$ by adding a 2-vertex $v_e$. The resulting graph $H'$ is 5-colorable and every face has length at most 5 (as $e$ was the only monochromatic edge in $\varphi$ and $v_e$ has four admissible colors). Moreover, if there is a 5-face $\alpha$, then it corresponds to a 4-face in $H$ and hence $|C_{\varphi}(V(\alpha))| \leq 4$. Then we can apply Proposition~\ref{thomassen.proposition} (as for arbitrary toroidal embedding of $G$ it holds that it is locally planar) to show that the interior of every face $\alpha \in F(H)$ can be properly colored with 5 colors. After removing of $v_e$ we obtain a desired $(0,0,0,0,1^*)$-coloring of $G$.

%Let $S$ be a facial-cycle of a $4^-$-face $\alpha$ of $H$. If $S$ contains a monochromatic edge $e$, we subdivide this edge by adding a 2-vertex $v_S$ on it. Then the resulting graph $H + v_S$ is 5-colorable and $S$ has length at most 5 (as $e$ was the only monochromatic edge in $H$ and $v_S$ has four admissible colors). Then we can apply Proposition~\ref{thomassen.proposition} (as for arbitrary toroidal embedding of $G$ it holds that it is locally planar) to show that the interior of $S$ can be properly colored with 5 colors. After removing of $v_S$ we obtain a desired $(0,0,0,0,1^*)$-coloring of $G$.

Next we discuss the case when $G$ contains $K_6$ as a subgraph.
For toroidal embedding of $K_6$ it holds that $|V| = 6$, $|E| = 15$, $|F| = 9$.
Therefore, as a consequence of Euler's formula we have $27 = 3\cdot9 \leq \sum_{f \in F} \deg(f) = 2|E| = 2\cdot15 = 30$, what gives us three possibilities for the toroidal embedding of $K_6$. 
Either we have six $3$-faces and three $4$-faces, or seven $3$-faces, one $4$-face and one $5$-face, or eight $3$-faces and one $6$-face (see the left side of Figure \ref{fig:(0,0,0,0,1)-coloring}).
It is easy to see that $K_6$ is $(0,0,0,0,1^*)$-colorable.
We choose a $(0,0,0,0,1^*)$-coloring $\varphi$ of $K_6$ in such a way that the pair of adjacent vertices receiving the same color is incident with the face of degree 5 or 6 (if the embedding does not contain such a face, otherwise choose $\varphi$ arbitrarily). 
Now we want to again use Proposition \ref{thomassen.proposition} to show that also $G$ is $(0,0,0,0,1^*)$-colorable.

Let $S$ be a facial-cycle of a $p$-face $\alpha$ of a subgraph $K_6$ in $G$. \\ %, where $p$ denotes a length of $S$.
If $p\leq 4$, we proceed in a similar way as in the previous case and subdivide a monochromatic edge if there is any.
Let $p'$ denote the degree of $\alpha$ after possible subdivision of the monochromatic edge in $K_6$.
Since $p'\leq 5$, we need to discuss only case $(i)$ from Proposition~\ref{thomassen.proposition}. %among the cases $(i), (ii), (iii)$ in Proposition \ref{thomassen.proposition}, it is only the case $(i)$ that can occur.
%In this case $p = 4$ and $S$ would contain a monochromatic edge, but then $|C(S)| \leq 4$, a contradiction.
In this case $|C_\varphi(S)| \leq 4$ (as $p' = 5$ only if $S$ contains a monochromatic edge) and hence it is possible to extend the coloring of $S$ to proper 5-coloring of its interior. \\
%Therefore, in the given $(0,0,0,0,1^*)$-coloring of the subgraph $K_6$ we can properly color the interior of the face $\alpha$ using $5$ colors.\\
If $5\leq p \leq 6$, then $S$ contains a monochromatic edge $e = uv$.
In this case we contract $e$ into a single vertex $u^*$ and $p' = p-1$.
Since again $p'\leq 5$, we need to investigate only case $(i)$ from Proposition~\ref{thomassen.proposition}.
%the cases $(ii)$ and $(iii)$ from Proposition \ref{thomassen.proposition} cannot occur. The only case to solve is the one where $p'=5$ and there exists a vertex $w$ which is adjacent to each of five vertices of the face $\alpha$ after the contraction of the edge $e$.
The only problematic case that needs to be discussed is when $p'=5$ (i.e. $p = 6$) and there is a vertex adjacent to all vertices of $S / e$ $-$ we show that this case cannot occur.
Every vertex $w$ can be adjacent to at most five vertices %of the original $6$-face of the embedding of $K_6$, for $G$ does not contain $K_7$ as a subgraph.
of $S$ (as $G$ does not contain $K_7$), let $t$ be a vertex of $S$ that is not adjacent to $w$.
%We color subgraph $K_6$ in such a way that $t$ is not incident with a monochromatic edge in this coloring ($t\neq u,\ t\neq v$).
Let $\varphi$ be such an initial coloring of $K_6$ in which $t$ is not incident with a monochromatic edge (i.e. $t\neq u,\ t\neq v$).
Then $t\neq u^*$ and after contraction of $e$ is $w$ adjacent only to at most four vertices of $S / e$. %the boundary of the face $\alpha'$.
Therefore also case $(i)$ is not feasible and we can extend $\varphi$ to a $(0,0,0,0,1^*)$-coloring of whole $G$. %it is possible to properly color the interior of each face with five colors.
\end{proof}

Values of the defects for the color classes in Theorem \ref{00001} are the best possible as graphs $C_3\vee C_5$, $K_2\vee H_7$ and $T_{11}$ are not $5$-colorable.
%When we allow also $K_7$ as a subgraph, then we obtain the following:
For toroidal graphs in general, we have the following result:

\begin{theorem}
\label{00011}
Every toroidal graph is $(0,0,0,1^*,1^*)$-colorable.
\end{theorem}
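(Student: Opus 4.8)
The plan is to reduce everything to the single case $K_7 \subseteq G$. Indeed, Theorem~\ref{00001} already tells us that every toroidal graph containing no $K_7$ is $(0,0,0,0,1^*)$-colorable, and such a coloring is in particular a $(0,0,0,1^*,1^*)$-coloring (a defect-$0$ class has no edges, so it is a fortiori a $1^*$-class). Hence it suffices to treat a toroidal graph $G$ that contains $K_7$ as a subgraph, and I would devote the whole proof to this case.

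First I would record the structural fact that $K_7$ triangulates the torus in \emph{any} embedding. Since $K_7$ has genus $1$, every embedding of it in the torus is of minimum genus, hence cellular, so Euler's formula~(\ref{EulerFormula}) gives $|F| = |E| - |V| = 21 - 7 = 14$ and $\sum_{f} \deg(f) = 2|E| = 42$, forcing every face to have degree exactly $3$. Consequently, inside the embedding of $G$ the subgraph $K_7$ cuts the torus into $14$ triangular disks, and every vertex of $V(G)\setminus V(K_7)$ lies in the interior of one of these triangles.

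Next I would colour the seven vertices of $K_7$ by a $(0,0,0,1^*,1^*)$-coloring $\varphi$: three of them receive colours $1,2,3$, a pair $\{d,e\}$ receives colour $4$, and the remaining pair $\{f,g\}$ receives colour $5$. Then $de$ and $fg$ are the only monochromatic edges, they lie in the two starred classes, and being vertex-disjoint, no triangular face of $K_7$ contains both. To extend $\varphi$ to the interiors I would reuse the subdivision device from the proof of Theorem~\ref{00001}: subdivide $de$ and $fg$ by new $2$-vertices $v_{de},v_{fg}$, colouring $v_{de}$ with any colour $\neq 4$ and $v_{fg}$ with any colour $\neq 5$. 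After this, every face of the subdivided $K_7$ is bounded by a \emph{properly} coloured cycle of length at most $4$ (the triangles not meeting $\{d,e\}$ or $\{f,g\}$ carry three distinct colours, and the two buffered faces become properly coloured $4$-cycles), and the interior of each face together with its boundary is a planar graph. Because each boundary cycle $S$ has $k \le 4 < 6$ and $|C_\varphi(S)| \le 4 < 5$, none of the obstructions $(i),(ii),(iii)$ of Proposition~\ref{thomassen.proposition} can occur, so every boundary coloring extends to a proper $5$-coloring of the corresponding interior. Deleting $v_{de},v_{fg}$ then restores the edges $de$ and $fg$.

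Finally I would check that the resulting coloring is as required. Classes $1,2,3$ stay independent because each face was extended by a proper coloring. For class $4$: the extensions are proper and $d,e$ are coloured $4$, so no interior vertex adjacent to $d$ or $e$ is coloured $4$, and interior $4$-vertices are pairwise non-adjacent within each face; since vertices lying in the interiors of distinct triangular faces of $K_7$ are non-adjacent in the embedding, the only edge inside class $4$ over all of $G$ is $de$. The identical argument applied to $fg$ shows that class $5$ induces exactly the edge $fg$, yielding a $(0,0,0,1^*,1^*)$-coloring of $G$. The only points demanding care are the cellularity/triangulation claim for $K_7$ and the bookkeeping that subdivision-and-restoration never produces a \emph{second} monochromatic edge in class $4$ or $5$; Proposition~\ref{thomassen.proposition} is here invoked only in its trivial regime, so once the two monochromatic edges have been buffered by subdivision the extension step is essentially free.
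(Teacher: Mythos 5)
Your proof is correct and follows essentially the same route as the paper: reduce via Theorem~\ref{00001} to the case $K_7 \subseteq G$, observe that $K_7$ triangulates the torus, fix a $(0,0,0,1^*,1^*)$-coloring of $K_7$, and extend it into the face interiors via the subdivide-then-apply-Proposition~\ref{thomassen.proposition} device. The paper's own proof is only a terse sketch that defers to the proof of Theorem~\ref{00001}; your write-up supplies exactly the details it leaves implicit (cellularity of the induced $K_7$ embedding, vertex-disjointness of the two monochromatic edges, and the bookkeeping after un-subdividing).
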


\begin{proof}
According to Theorem \ref{00001} it suffices to show that any toroidal graph containing $K_7$ as a subgraph is $(0,0,0,1^*,1^*)$-colorable.
Recall that $K_7$ is $(0,0,0,1^*,1^*)$-colorable.
Let $G$ be a toroidal graph that contains $K_7$ as a subgraph.
Since $K_7$ is a triangulation, we can properly color interior of every 3-face similarly as in proof of Theorem \ref{00001}, resulting into $(0,0,0,1^*,1^*)$-coloring of $G$.

%The case analysis is the same as in the last case of the previous proof except for the case $p = 6$, $p'=5$. 
%Then there exists a vertex $w$ in the interior of $S$ which is adjacent to all six vertices of cycle $S$.
%Let $t$ be a vertex of $S$ which is not incident with monochromatic edge $e$ and $e^*$ is an edge connecting $w$ and $t$.
%After contraction of $e$ and removal of $e^*$ none of the possibilities $(i), (ii), (iii)$ can occur, therefore we can properly color the interiors of all faces of subgraph $K_6$ with five colors.
%Since vertices $w$ and $t$ received the same color and at the same time have color different from that used on monochromatic edge $e$ in the proper $5$-coloring of the modified graph, the resulting coloring of $G$ is $(0,0,0,1^*,1^*)$-coloring. 
\end{proof}

%Adding an additional restriction concerning 3-faces of a graph, we gain another result.
%In the proof we use the following properties of toroidal graphs.
%
%\begin{lemma}\cite{XuZha}
%Let $G$ be a connected toroidal graph.
%Then one of the following holds:
%\begin{itemize}
%\item $G$ contains two adjacent 3-faces,
%\item $\delta(G)<4$,
%\item $G$ contains two adjacent 4-vertices,
%\item $G$ contains $(4,5,5)$-face.
%\end{itemize}
%\end{lemma}
%
%\begin{theorem}
%Every toroidal graph without adjacent triangles is $(0,0,0,0,0)$-colorable.
%\end{theorem}
%
%\begin{proof}
%Let $G$ be the minimal counterexample to the statement.
%Then $G$ is toroidal graph without adjacent triangles which is not properly 5-colorable, while an arbitrary subgraph $H$ of $G$ is 5-colorable.
%Since $G$  does not contain two adjacent triangles, it contains either vertex of degree at most 3, or two adjacent 4-vertices, or $(4,5,5)$-face.
%
%Suppose that $\delta<5$ and let $v$ be $4^-$ vertex in $G$.
%Then $G - v$ is $(0,0,0,0,0)$-colorable, and since $v$ had at most 4 neighbors in $G$, there is at least one remaining color by which $v$ can be colored.
%The resulting coloring is $(0,0,0,0,0)$-coloring of $G$, a contradiction. 
%Hence, all vertices in $G$ have degree at least 5.
%
%It follows that $G$ does not contain neither two adjacent 4-vertices, nor $(4,5,5)$-face, a contradiction.
%\end{proof}

\section{Improper coloring with 4 colors}
\label{4col}

If we consider improper 4-coloring with defect $d$, the question whether every toroidal graph is $(1,1,1,1)$-colorable remains open (see~\cite{CowGod}).
Xu and Zhang~\cite{XuZha} proved this to be true for the class of toroidal graphs without adjacent triangles.
To prove even stronger result (where we lower some of the values of defects) only one from the values of defects can be lowered to zero (as $K_7$ is not $(0,0,1,1)$-colorable).

Theorem \ref{ChoEsp} deals with one non-zero defect, and it yields the following corollary.

\begin{corollary}
Every toroidal graph is $(0,0,0,14)$-colorable.
\end{corollary}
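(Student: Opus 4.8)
The plan is to derive this directly from Theorem \ref{ChoEsp} by specializing to the torus. The torus is the orientable surface $\mathbb{S}_1$, obtained from a sphere by adding a single handle, so its genus is $1$. Since the surface is orientable, its Euler genus equals twice its genus, that is ${\rm eg} = 2\cdot 1 = 2$. A toroidal graph is by definition a graph embeddable on this surface, so every such graph has Euler genus at most $2$; in fact the relevant bound in Theorem \ref{ChoEsp} is monotone, so it suffices to plug in $eg = 2$.

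With $eg = 2 > 0$, Theorem \ref{ChoEsp} asserts that every graph embedded on a surface of Euler genus $eg$ is $(0,0,0,9eg-4)$-colorable. Substituting $eg = 2$ gives $9\cdot 2 - 4 = 14$, so every toroidal graph is $(0,0,0,14)$-colorable. That is the entire content of the corollary.

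The only subtlety to address is the matching of genus conventions, and this is the step I would be most careful about. One must confirm that the torus has Euler genus exactly $2$ rather than $1$: the non-orientable surface $\mathbb{N}_k$ has Euler genus $k$, whereas the orientable $\mathbb{S}_k$ has Euler genus $2k$, and the torus is the orientable $\mathbb{S}_1$. A reader tempted to set $eg=1$ would wrongly obtain $9-4=5$. So the writeup should state explicitly that ${\rm eg}(\text{torus}) = 2$, invoking the definition of Euler genus recalled in Section~2. Once that identification is fixed, there is no genuine mathematical obstacle; the proof is a one-line substitution into an already-established theorem. I would simply write: since toroidal graphs embed on a surface of Euler genus $2$, Theorem \ref{ChoEsp} with $eg = 2$ yields a $(0,0,0,14)$-coloring.
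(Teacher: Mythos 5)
Your proposal is correct and is essentially identical to the paper's own justification: the paper treats this as an immediate corollary of Theorem~\ref{ChoEsp}, obtained by noting that toroidal graphs embed on $\mathbb{S}_1$, which has Euler genus $2$, and substituting $eg=2$ to get $9\cdot 2-4=14$. Your extra care in fixing the Euler genus convention (avoiding the erroneous $eg=1$) is exactly the right point to be careful about, and nothing further is needed.
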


This corollary gives an upper bound for the value of the non-zero defect $k$.
We have found a better upper bound for $k$ by adjusting the result of Choi and Esperet~\cite{ChoEsp}.

\begin{theorem}\label{0004}
Every toroidal graph is $(0,0,0,4)$-colorable.
\end{theorem}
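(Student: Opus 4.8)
The plan is to adapt the discharging argument that Choi and Esperet used to obtain Theorem~\ref{ChoEsp} and to exploit the special structure of the torus (Euler genus exactly $2$, so $|F| = |E| - |V|$) in order to push the non-zero defect all the way down to $4$. I would argue by contradiction: suppose $G$ is a counterexample that is \emph{vertex-minimal}, i.e.\ a toroidal graph admitting no $(0,0,0,4)$-coloring with $|V(G)|$ as small as possible. By minimality every proper subgraph of $G$ \emph{is} $(0,0,0,4)$-colorable, and this reducibility will forbid low-degree configurations. The first step is to establish that $G$ has minimum degree at least some threshold and to rule out a list of reducible configurations by the standard extension-of-coloring technique: take a $(0,0,0,4)$-coloring of $G - v$ (or of $G$ minus a small reducible subgraph), and show that the removed vertices can always be recolored so that the three proper classes remain proper and the fourth class keeps maximum degree at most $4$. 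The key counting fact is that a vertex in the defect-$4$ class may already have up to four neighbours of its own colour, so a vertex of small enough degree can always be absorbed into some class.

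Once the reducible configurations are pinned down, the second step is a discharging phase on a fixed toroidal embedding of $G$. I would assign to each vertex $v$ the charge $\deg(v) - 6$ and to each face $f$ the charge $2\deg(f) - 6$; by Euler's formula for the torus the total charge is exactly
\begin{equation}
\sum_{v \in V} (\deg(v) - 6) + \sum_{f \in F} (2\deg(f) - 6) = 2|E| - 6|V| + 4|E| - 6|F| = 6(|E| - |V| - |F|) = 0,
\end{equation}
using $|F| = |E| - |V|$. I would then design discharging rules (sending charge from high-degree vertices and large faces toward the low-degree vertices that the reducibility analysis has shown to be scarce) so that, after redistribution, every vertex and every face carries non-negative charge while at least one element carries strictly positive charge, contradicting that the total is $0$. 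The structural input from step one is what guarantees that the neighbourhoods of small-degree vertices are rich enough to receive the charge they need.

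The main obstacle I anticipate is the reducibility analysis, not the discharging arithmetic. To get the defect down to $4$ rather than the much larger value $14$ coming directly from Theorem~\ref{ChoEsp}, the reducible configurations must be both numerous and delicate: I expect to need configurations built around vertices of degree up to roughly $8$ or $9$ whose colour can be repaired, and verifying that a partial $(0,0,0,4)$-coloring always extends across such a configuration requires a careful case analysis of how many colours already appear on the neighbourhood and how full the defect class is around the deleted vertices. A secondary subtlety is that the torus, unlike the sphere, admits non-contractible cycles, so I must make sure any local recolouring argument is genuinely local; here I would lean on the fact (used already in the proofs of Theorems~\ref{00001} and~\ref{00011}) that a toroidal embedding is locally planar, allowing planar colour-extension reasoning inside each disc-bounded region. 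If a purely local argument proves insufficient, the fallback is to first apply the transformation $T(G,C)$ to pass to a planar graph $H$, solve the colouring problem there with four colours and a bounded-degree fifth class, and then fold the two contracted copies of the shortest non-contractible cycle $C$ back into the defect-$4$ class, checking that merging $C$ does not raise the maximum degree of that class above $4$.
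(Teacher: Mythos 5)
Your proposal is a plan, not a proof, and the plan has a structural flaw. The reducible configurations and the discharging rules --- which you yourself identify as the crux --- are never exhibited, so nothing is actually established. Worse, the framework as set up cannot work on the torus: the total charge is $0$, and every $6$-regular toroidal graph is necessarily a triangulation (by Euler's formula, $|E|=3|V|$ and $|F|=2|V|$ force all faces to be triangles), so for such a graph every vertex has initial charge $\deg(v)-6=0$ and every face has initial charge $2\deg(f)-6=0$. Since discharging preserves the total, the target state ``all charges non-negative and at least one strictly positive'' is unreachable for these graphs, no matter how the rules are designed. Hence a minimal counterexample that happens to be a $6$-regular triangulation (the torus has infinitely many, e.g.\ $K_7$, $T_{11}$ and all of Altshuler's grids $G[m\times n,k]$) can never be excluded by your discharging phase; it would have to be handled by a separate structural argument, which the proposal does not contain. (The paper's treatment of $6$-regular toroidal graphs via Altshuler's characterization is a different theorem and requires substantial extra work.)

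Your fallback is the right direction --- it is essentially the paper's actual proof --- but the step you wave at (``folding the two contracted copies of $C$ back into the defect-$4$ class'') is exactly where all the content lies, and as stated it fails. After $T(G,C)$, the two vertices $u,v$ representing the copies of $C$ may receive \emph{different} colors in a proper $4$-coloring of the planar graph, and assigning all of $V(C)$ to some fixed color class $i$ is unsound: a vertex of $C$ may have arbitrarily many neighbors colored $i$ in $G$, so the resulting class need not have maximum degree $\le 4$. The paper's fix is to also contract a shortest $u$--$v$ path $P^*$ of $G^*=T(G,C)$ into a single vertex $v^*$, properly $4$-color the resulting planar graph $G'$, and take the defect class to be $V(C)\cup V(P)$, all colored $\varphi'(v^*)$, where $P=P^*\setminus\{u,v\}$. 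Then every vertex outside $C\cup P$ adjacent to $C\cup P$ is adjacent to $v^*$ in $G'$, hence has a different color, so all defect is confined to $\langle V(C)\cup V(P)\rangle_G$; and its maximum degree is at most $4$ because $C$ is induced with every vertex of $G$ having at most $3$ neighbors on $C$ (Observation~\ref{obs}), while the shortest-path property of $P^*$ guarantees that only $p_1,p_{\ell}$ are adjacent to $C$ and each vertex of $P$ has at most two neighbors in $P$. Without this path-contraction idea and the accompanying degree count, your outline does not yield the theorem.
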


\begin{proof}
Given a toroidal graph $G$, let $C$ be the shortest non-contractible cycle.
Let $G^* = T(G,C)$, and let $u$ and $v$ be the vertices that correspond to the contracted copies of $C$.
Let $P^*$ be the shortest $uv$-path in $G^*$, and let $P=P^*\setminus\{u,v\}$.
Now, contract all edges of $P^*$ into a single vertex $v^*$ to obtain a graph $G'$ that is planar, and therefore $4$-colorable.
Let $\varphi'$ be a proper 4-coloring of $G'$.
We define coloring $\varphi$ of original graph $G$ in the following way: for vertices $x\in V(C) \cup V(P)$ let $\varphi(x) = \varphi'(v^*)$, for all other vertices $y$ let $\varphi(y) = \varphi'(y)$.
Denote the induced subgraph consisting of vertices of $V(C) \cup V(P)$ as $H$.
We will show that $\Delta(H)\le4$.

Let $C = c_1 \dots c_kc_1$ and $P = p_1 \dots p_{\ell}$.
As $P^*$ is the shortest $uv$-path in $G^*$, only vertices $p_1$ and $p_\ell$ are adjacent to vertices of $C$.
Moreover, each vertex of $P$ has at most two neighbors in $P$.

We will use the following observation of Choi and Esperet~\cite{ChoEsp}.
\begin{observation}
\label{obs}
Let $G$ be a graph embedded on some surface. 
If $C$ is a shortest non-contractible cycle in $G$, then $C$ is an induced cycle and each vertex of $G$ has at most $3$ neighbors in $C$.
\end{observation}

From Observation \ref{obs} it follows that each vertex of $G$, including all vertices of $P$, has at most three neighbors in $C$.
Therefore, vertices $p_1$ and $p_{\ell}$ have degree at most four.

As every vertex of $C$ is in $H$ adjacent to two vertices of $C$ (since $C$ is induced) and at most two vertices of $P$ ($p_1$ and $p_{\ell}$), $H$ has maximum degree at most four.
\end{proof}

As $K_7$ is not $(0,0,0,2)$-colorable, from Theorem \ref{0004} it follows that every toroidal graph is $(0,0,0,k)$-colorable, where $k$ is either $3$ or $4$.
The question whether toroidal graphs are $(0,0,0,3)$-colorable in general remains open.

However, if we admit more than one non-zero defect, we can prove the following:
\begin{lemma}
Every toroidal graph is $(0,1,2,2)$-colorable.
\end{lemma}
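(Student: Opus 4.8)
The plan is to bootstrap from a fact already recorded in the introduction: every toroidal graph is $(2,2,2)$-colorable (Cowen and Goddard~\cite{CowGod}). Starting from such a coloring with color classes $V_1,V_2,V_3$, each induced subgraph $\langle V_i\rangle_G$ has maximum degree at most $2$, so it is a disjoint union of paths and cycles. The idea is to leave two of these classes untouched (they already realize defect $2$) and to refine the third one into two classes of defects $0$ and $1$, thereby spending the one extra color that a $4$-coloring affords us over a $3$-coloring.

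Concretely, first I would fix a $(2,2,2)$-coloring $V_1,V_2,V_3$ of $G$. Next I would split $V_1$ into an independent set and a set of maximum degree at most $1$. Since $\langle V_1\rangle_G$ has maximum degree at most $2$, each of its components is a path or a cycle; a path (and likewise an even cycle) is properly $2$-colorable, while every odd cycle is $(0,1^*)$-colorable by Observation~\ref{cykly}. Collecting the first color of each component into a set $V_1'$ and the second color into a set $V_1''$, the set $V_1'$ is independent in $G$, and $\langle V_1''\rangle_G$ has maximum degree at most $1$: each component contributes at most one edge to it, and distinct components are vertex-disjoint, so $\langle V_1''\rangle_G$ is a matching.

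Finally I would output the four classes $V_1',V_1'',V_2,V_3$, whose defects are $0,1,2,2$ respectively. There is no real obstacle once the $(2,2,2)$-colorability is granted; the single point that needs care is the defect bookkeeping, namely that the defect of a class depends only on the edges inside it. Because $V_1'$ and $V_1''$ are subsets of $V_1$, passing to them can only delete edges, so $\langle V_1'\rangle_G$ is edgeless and $\langle V_1''\rangle_G$ has maximum degree at most $1$; and since $V_2,V_3$ are inherited unchanged, their defects remain at most $2$. Hence $G$ is $(0,1,2,2)$-colorable, with essentially all of the difficulty offloaded onto the known $(2,2,2)$-result.
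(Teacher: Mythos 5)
Your proposal is correct and is essentially the paper's own proof: both start from a $(2,2,2)$-coloring of the toroidal graph, observe that one color class induces a disjoint union of paths and cycles, and refine that class with two new colors into an independent set and a set inducing a matching, leaving the other two classes untouched. The only cosmetic difference is that you spell out the $(0,1)$-colorability of paths and cycles explicitly, where the paper simply asserts it.
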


\begin{proof}
Let $G$ be an arbitrary toroidal graph. Then $G$ is $(2,2,2)$-colorable (by Theorem \ref{ChoEsp}). Let $\varphi$ be a $(2,2,2)$-coloring of $G$.
As every color class in $\varphi$ induces the set of disjoint paths and cycles, let $H$ be a subgraph induced by one of these colors.
Then $H$ is $(0,1)$-colorable; let $\psi$ be a $(0,1)$-coloring of $H$ with two new colors (different from these used in $\varphi$).
We construct new coloring $\varphi'$ of $G$ in the following way:
for all vertices $x\in G - H$, let $\varphi'(x) = \varphi(x)$.
For all vertices $x\in H$, let $ \varphi'(x)=\psi(x)$.
Hence, $G$ is $(0,1,2,2)$-colorable.
\end{proof}

It remains open whether non-zero defects in the previous result can be lowered.

\subsection{Improper coloring of 6-regular graphs}

In order to prove the $(0,0,0,k)$-colorability with $k \le 3$ we deal with some special subclasses of toroidal graphs. We focused mainly on 6-regular toroidal graphs.

\bigskip

First, we introduce a characterization of $6$-regular graphs embedded in torus given by Altshuler~\cite{Alt}.
Given a rectangular grid of $m$ rows and $n$ columns, we label as $(i,j)$ the vertex on the $i$th row and $j$th column.
A \textit{$6$-regular right-diagonal grid $G[m\times n]$} is a graph with vertex set $V = \{(i, j): 1\leq i\leq m, 1\leq j\leq n\}$, where the vertex $(i, j)$ has neighbors
$(i, j-1), (i, j+1), (i-1, j), (i+1, j), (i+1, j-1), (i-1, j+1)$.
In the first coordinate we count modulo $m$ and in the second coordinate modulo $n$.

A \textit{$6$-regular right-diagonal shifted grid $G[m\times n, k],\ 1\leq k \leq m,\ k\in \mathbb{Z},$} is almost the same as $G[m \times n]$, but with added rotation before the vertices of the $n$th column are joined to the first column. 
In other words, the vertex $(i, n)$ is now adjacent to $(i +k-2, 1)$ and $(i +k-1, 1)$ and remains adjacent to $(i+1, n), (i+1, n-1), (i, n-1), (i-1, n),\ i = 1, 2,\dots, m$. 
$G[m \times n]$ is the same as $G[m \times n, 1]$ for any positive integers $m, n$. \\
Obviously, $G[m\times n, k]$ is a $6$-regular toroidal graph. 
%for any positive integers $m,n$ and $1\leq k \leq m$.
Following theorem shows that the converse is also true.

\begin{theorem}[Altshuler~\cite{Alt}]
\label{Alt}
Every $6$-regular toroidal graph is isomorphic to a $6$-regular right-diagonal shifted grid $G[m \times n, k]$ for some integers $m, n, k$.
\end{theorem}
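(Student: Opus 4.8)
The plan is to prove Altshuler's characterization by showing that any 6-regular toroidal graph $G$ must arise from the grid construction, exploiting the rigidity forced by 6-regularity together with Euler's formula. First I would observe that since $G$ is 6-regular, Euler's formula gives $|V|-|E|+|F|=0$ with $|E|=3|V|$, so $|F|=2|V|$. Combining this with the face-degree sum $\sum_{f}\deg(f)=2|E|=6|V|$ and the bound $\deg(f)\geq 3$ forces $\sum_{f}(\deg(f)-3)=0$, hence every face is a triangle. Thus $G$ is necessarily a triangulation of the torus, and I would make this the structural starting point: a 6-regular torus triangulation.

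Next I would extract the local combinatorial structure. Around each vertex $v$, the six incident edges appear in a rotation, and since every face is a triangle, the six neighbors of $v$ form a closed $6$-cycle in the link of $v$ (consecutive neighbors in the rotation are adjacent, forming the triangular faces). This means the neighborhood of every vertex looks exactly like the interior of the right-diagonal grid, where $(i,j)$ is surrounded by the hexagonal ring $(i,j-1),(i+1,j-1),(i+1,j),(i,j+1),(i-1,j+1),(i-1,j)$. The key step is then to use this uniform local picture to build a global coordinate system: I would pick a vertex, declare it $(1,1)$, fix one neighbor as the "$+1$ in the second coordinate" direction and a second as the "$+1$ in the first coordinate" direction, and propagate these coordinate labels across the whole graph by walking along the two families of grid-lines. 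Because the torus is obtained from a fundamental rectangle, walking in the horizontal direction eventually returns to the starting row (giving the period $n$), and walking vertically returns to the starting column but possibly with a cyclic shift $k$ in the horizontal coordinate — this shift is exactly the parameter $k$ in $G[m\times n,k]$.

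The main obstacle I expect is making the propagation argument rigorous: I must verify that the coordinatization is consistent (i.e. that traversing a closed loop on the torus brings the labels back in a well-defined way) and that no contradictions arise from the two generators of the torus's fundamental group. Concretely, the horizontal translation and the vertical-plus-shift translation must commute as automorphisms of the labeled structure, and I would need to check that the only freedom is the single integer shift $k$ modulo $m$, yielding precisely the family $G[m\times n,k]$. A clean way to organize this is to regard $G$ as a quotient of the infinite triangular lattice (the universal cover) by a rank-$2$ translation subgroup $\Lambda$ of the lattice's translation group; the local 6-regular triangulated structure guarantees the universal cover is the standard triangular lattice, and classifying the sublattices $\Lambda$ up to the relevant symmetry gives exactly the parameters $(m,n,k)$.

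Finally, I would close the argument by confirming the correspondence is an isomorphism of graphs: the coordinate map sends edges of $G$ to edges of $G[m\times n,k]$ and is a bijection on vertices by construction, so $G\cong G[m\times n,k]$ for the extracted integers $m,n,k$. The converse direction (that every $G[m\times n,k]$ is a 6-regular toroidal graph) is already noted in the excerpt, so establishing this forward direction completes the characterization.
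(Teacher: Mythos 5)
This theorem is stated in the paper as a quotation of Altshuler~\cite{Alt}; the paper contains no proof of it, so there is no internal argument to compare against, and your proposal must be judged on its own. As an outline it is sound, and it is in fact the modern route to the result, different from Altshuler's original one. Your first step is correct and can even be sharpened: for any embedding of a connected simple $6$-regular graph on the torus, Euler's inequality $|V|-|E|+|F|\geq 0$ (with equality exactly for cellular embeddings), together with $\sum_f \deg(f)=2|E|=6|V|$ and $\deg(f)\geq 3$, forces the embedding to be cellular and all faces to be triangles, so no cellularity assumption is needed. The universal-cover argument then proceeds as you say: the lift is a simply connected triangulation with all vertex degrees $6$, hence the standard triangular lattice, and $G$ is its quotient by the deck group $\pi_1(\mathbb{T}^2)\cong\mathbb{Z}^2$; classifying rank-$2$ sublattices (e.g.\ by Hermite normal form, one basis vector along a row, the other giving the vertical period $m$ together with the horizontal shift $k$) yields exactly the family $G[m\times n,k]$. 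Altshuler's own proof is instead purely combinatorial: he follows the ``straight-ahead'' closed walks (normal circuits) through the triangulation, shows they partition the vertices into parallel cycles of equal length, and reads off the grid rows directly; that approach avoids any appeal to covering-space theory. What your version buys in conceptual clarity it pays for in two pillars that you assert rather than prove: (i) that every simply connected triangulation with all vertex degrees $6$ is combinatorially the triangular lattice, and (ii) that the deck transformations act as translations of that lattice --- here you need orientability of the torus to rule out glide reflections, which also act freely on the plane, a point your ``commuting translations'' remark glosses over. Both facts are true and standard, but a self-contained proof must supply them; your proposal correctly locates where the work lies but leaves those two steps open.
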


We define a \textit{circulant graph} $G_n[S]$ with $S\subseteq\{1,2,\dots, \lfloor n/2\rfloor\}$ as a graph with $V = \{0,1,\dots,n-1\}$ and $E = \{ij: i-j \equiv x \mod n, x \in S\}$.
The set $S$ is called the \textit{generating set} of $G_n[S]$.

It holds that every $6$-regular shifted grid $G[m \times 1, i]$ is a circulant graph $G_m[1, i-2, i-1]$ (see~\cite{ColHut}).

It was conjectured (\'Ad\'am~\cite{Adam}) that $G_n[S] \cong G_n[S']$ if and only if $S=pS'$ for some unit $p \mod n$.
Alspach and Parsons~\cite{AlsPar} gave sufficient conditions for which $n$ the straight implication holds.
The opposite direction is easy to see, that is if $S=pS'$ for some unit $p \mod n$, then $G_n[S] \cong G_n[S']$.
%This fact will be important later.

Yeh and Zhu~\cite{YehZhu} characterized all $4$-colorable $6$-regular toroidal graphs.
%The $4$-colorability of $6$-regular toroidal graphs has already been characterized by Yeh and Zhu~\cite{YehZhu}.
%The following theorem holds:

\begin{theorem}\cite{YehZhu}
\label{YehZhu}
$6$-regular toroidal graphs are $4$-colorable, with the following exceptions:
\begin{enumerate}
\item $G \in \{G[3 \times 3, 2], G[3 \times 3, 3], G[5 \times 3, 2], G[5 \times 3, 3], G[5 \times 5, 3], G[5 \times 5, 4]\}$;
\item $G = G[m \times 2, 1]$ with $m$ odd;
\item $G = G_n[1, r, r+1]$ and $n = 2r+2, 2r+3, 3r+1$ or $3r+2$ and n is not divisible by 4;
\item $G = G_n[1,2,3]$ and $n$ is not divisible by 4;
\item $G = G_n[1, r, r+1]$ and $(r,n) \in \{(3, 13), (3, 17), (3, 18), (3, 25), (4, 17), (6, 17), (6, 25),$ $(6, 33), (7, 19), (7, 25), (7, 26), (9, 25), (10, 25), (10, 26), (10, 37), (14, 33)\}$.
\end{enumerate}
\end{theorem}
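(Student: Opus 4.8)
The plan is to combine Altshuler's classification (Theorem~\ref{Alt}) with an analysis of explicit periodic colorings. By Theorem~\ref{Alt} every $6$-regular toroidal graph is some $G[m\times n,k]$, and such graphs are precisely the torus triangulations in which every vertex has degree $6$; thus they arise as quotients of the infinite triangular lattice $T$ (generated by the neighbor-differences $(1,0),(0,1),(1,-1)$) by a rank-$2$ sublattice $\Lambda$ of index $mn$ encoding the wrap-around identifications determined by $m,n,k$. The reference coloring $c(i,j)=(i-j)\bmod 3$ is a proper $3$-coloring of $T$, since each generator is nonzero modulo $3$. Hence $G[m\times n,k]$ is $3$-colorable via $c$ exactly when the two translation vectors defining $\Lambda$ lie in the kernel of $c$, a clean congruence condition in $m,n,k$ modulo $3$; this already disposes of a large family of grids. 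For the remaining ones I would first treat the ``fat'' case $n\ge 3$ by exhibiting periodic $4$-colorings whose fundamental pattern has period dividing $m$ and $n$: with four colors the shift $k$ can be absorbed by letting the pattern depend on both coordinates, and a finite case analysis on $m,n,k$ modulo $3$ and $4$ yields an explicit proper $4$-coloring for all but finitely many fat $(m,n,k)$. The finitely many small fat grids (those with $n\in\{3,5\}$ and small $m$) that resist every periodic pattern are precisely the candidates for the list in item~(1), which I would then settle by direct inspection.

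For the lower bounds the argument splits according to the shape of the exception. The sporadic graphs in items~(1) and~(5) I would verify non-$4$-colorable directly: for some of them a counting argument suffices (a proper $4$-coloring of a $v$-vertex graph forces a color class of size at least $\lceil v/4\rceil$, so if the independence number is below $v/4$ no $4$-coloring exists, which already kills $K_7=G_7[1,2,3]$), and the remaining specific pairs $(r,n)$ in item~(5) survive every congruence criterion and must be excluded by finite verification. The genuinely structural cases are the infinite families in items~(2),(3),(4), which rest on divisibility obstructions modulo $4$. The circulants $G_n[1,r,r+1]$ of items~(3) and~(4) are the thin grids $G[n\times 1,\,r+2]$, and a proper $4$-coloring of $\mathbb{Z}_n$ forbidding the differences $1,r,r+1$ forces the color sequence around the cycle to propagate in a rigid, locally triangular fashion whose closure around the $n$-cycle succeeds only when $4\mid n$ (or when $n$ satisfies one of the stated relations such as $n=2r+2,2r+3,3r+1,3r+2$); I would formalize this by tracking the propagation of the color constraints as a finite automaton on the cyclic boundary and showing the cyclic boundary condition fails unless the displayed divisibility holds. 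Item~(2), the graphs $G[m\times 2,1]$ with $m$ odd, reduces to a parity argument on the two-column strip. Conversely, whenever $4\mid n$ one always obtains a $4$-coloring: the pattern $i\mapsto i\bmod 4$ is proper when $r\equiv 1,2\pmod 4$, and a twisted variant covers $r\equiv 0,3\pmod 4$.

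The hardest part will be the thin-grid (circulant) analysis underlying items~(3)--(5). Deciding the chromatic number of $G_n[1,r,r+1]$ as $r$ and $n$ vary is essentially a number-theoretic packing question in $\mathbb{Z}_n$, and the boundary between $4$-colorable and non-$4$-colorable is delicate: note the mix of clean congruences ($4\nmid n$) with the sporadic pairs $(r,n)$ of item~(5) that pass the congruence analysis yet still fail and must be removed by hand. I expect the clean reductions—Altshuler together with periodic colorings—to dispatch the fat grids cheaply, while the main effort, and the source of both finite exceptional lists, lies in pinning down exactly which thin grids fail; this is where I anticipate needing a careful propagation/automaton argument supplemented by finite case-checking for the residual sporadic parameters.
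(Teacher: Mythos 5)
First, a point of comparison: the paper does not prove this statement at all. Theorem~\ref{YehZhu} is quoted verbatim from Yeh and Zhu~\cite{YehZhu} as an external black box, and the paper only \emph{uses} it (together with the explicit colorings of the exceptional graphs) to derive Theorem~\ref{6-reg}. So there is no internal proof to match; you are attempting to reprove a cited result, and the attempt has to stand on its own.

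As it stands, it does not. Two concrete problems. (a) Every decisive step is deferred rather than carried out: the fat-grid case rests on an unspecified ``finite case analysis on $m,n,k$ modulo $3$ and $4$'' claimed to work ``for all but finitely many'' parameters, the sporadic lists in items (1) and (5) are to be ``settled by direct inspection'' and ``excluded by finite verification,'' and the thin-grid obstruction is to be ``formalized'' by an automaton you never construct. But the entire content of the theorem \emph{is} the exact boundary of these exceptional sets; a proof sketch that postpones precisely those determinations proves nothing. (b) More seriously, the one structural claim you do commit to is false: you assert that a proper $4$-coloring of $G_n[1,r,r+1]$ ``succeeds only when $4\mid n$ (or when $n$ satisfies one of the stated relations such as $n=2r+2,2r+3,3r+1,3r+2$).'' The theorem says the opposite: those relations, \emph{combined with} $4\nmid n$, are exactly the non-$4$-colorable cases, while for $n$ outside those relations (and $(r,n)$ not sporadic) the graph is $4$-colorable regardless of $n\bmod 4$. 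For instance $G_{101}[1,10,11]$ matches none of the relations and is not sporadic, hence is $4$-colorable by the theorem, yet $4\nmid 101$; your claimed criterion would declare it non-colorable. The actual mechanism --- which the paper itself uses immediately after the theorem --- is that the relations $n=2r+3,3r+1,3r+2$ force $G_n[1,r,r+1]\cong G_n[1,2,3]$ via multiplication by a unit modulo $n$ (an \'Ad\'am-type isomorphism), so item (3) collapses to item (4), and only for $G_n[1,2,3]$ is divisibility of $n$ by $4$ the exact criterion. Any ``propagation/automaton'' argument built around your inverted implication would be attempting to prove a false statement, so this is not a repairable presentation gap but a wrong approach to items (3)--(5).
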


We extend these results and describe improper coloring of $6$-regular toroidal graphs. Based on Theorem~\ref{YehZhu} it is sufficient to explore improper coloring of exceptions.
We analyse each group separately.

\begin{enumerate}

%1
\item Graphs of this section are $(0,0,0,1)$-colorable, which is easy to check due to their small sizes (see Figure~\ref{fig:SmallGraphs}). 

\begin{figure}[h!]
				$$
					\includegraphics[scale=1]{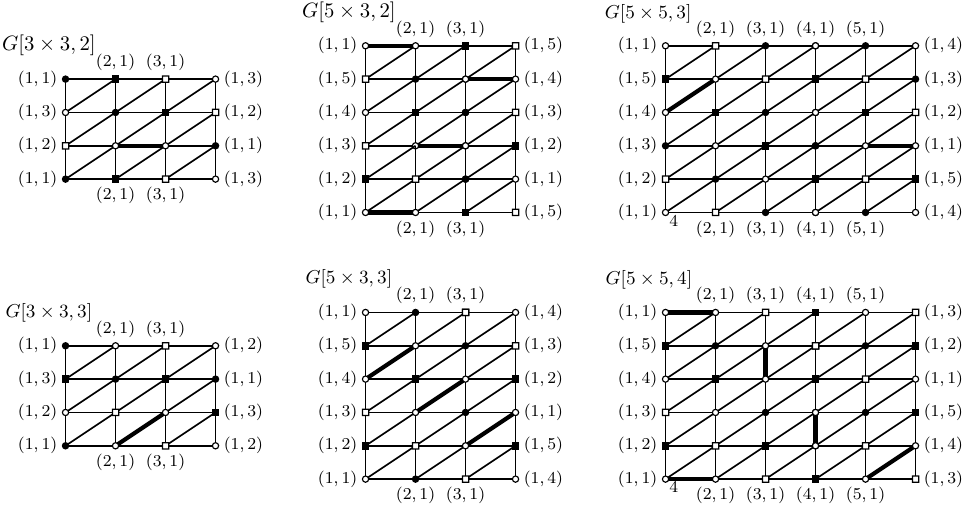}
				$$
				\caption{$(0,0,0,1)$-colorings of six small graphs. Empty circle corresponds to color 4. Bold edge corresponds to a monochromatic edge.}
				\label{fig:SmallGraphs}
\end{figure}	

%2
\item If $G = G[m \times 2, 1]$ where $m$ is odd, then $G$ is $5$-regular graph (ignoring edge multiplicity) and hence it does not belong to desired class.

%3
\item Suppose that $G = G_n[1, r, r+1]$.\\
If $n = 2r+2$, $G_n$ is not $6$-regular graph (again ignoring edge multiplicity).\\
We will show that other cases can be transformed to the case~4.\\
Let $n = 2r+3$.
It holds that $G_{2r+3}[1,r,r+1] \cong G_{2r+3}[1,2,3]$ using unit $(r+1) \mod (2r+3)$ (since $[1,r,r+1] = (r+1) [1,2,3]$).\\
If $n = 3r+1$, it holds that $G_{3r+1}[1,r,r+1] \cong G_{3r+1}[1,2,3]$ with unit $r$%$-$ we use unit $r \mod (3r+1)$
.\\
If $n = 3r+2$, it holds that $G_{3r+2}[1,r,r+1] \cong G_{3r+2}[1,2,3]$ with unit $(r+1)$% \mod (3r+2)$
.

%4
\item Now let $G = G_n[1,2,3]$. \\
If $n = 7$, then $G \cong K_7$, which is $(0,0,0,3)$-colorable.
If $n = 11$, then $G \cong T_{11}$, which is $(0,0,0,2)$-colorable.\\
Let $n>7, n\neq 11$.\\
If $n\equiv 1 \mod 4$, we color vertices $0, 1, \dots, n-1$ by colors $a,b,c,d$ according to the pattern $(abcd)^{\frac{n-5}{4}}abcdd$.\\
If $n\equiv 2 \mod 4$, we use pattern $(abcd)^{\frac{n-10}{4}}(abcdd)^2$.\\
If $n\equiv 3 \mod 4$, we use pattern $(abcd)^{\frac{n-15}{4}}(abcdd)^3$.\\
It follows that circulant graphs with generating set of the form $S = \{1, 2, 3\}$ (except $K_7$ and $T_{11}$) are $(0,0,0,1)$-colorable.

Moreover, any of these colorings needs at most three monochromatic edges.

%5
\item Finally, let $G = G_n[1, r, r+1]$.\\
If $(r,n) = (3, 13)$, then we color $V$ by pattern $abacdcdbabcdd$. \\
If $(r,n) = (6, 17)$, we use pattern $(abcd)^4d$.
Graphs $G_{17}(1, 3, 4)$ and $G_{17}(1, 4, 5)$ are isomorphic with $G_{17}(1, 6, 7)$ (using units 3 and 5), which means they are also $(0,0,0,1^*)$-colorable. \\
If $(r,n) = (3, 18)$, we use pattern $(ababdcdcd)^2$.\\
If $(r,n) = (7, 19)$, we use pattern $dabcdadbcddbcdadbca$. \\
If $(r,n) = (6,25)$, we use pattern $(abcd)^6d$.
Graphs $G_{25}(1, 3, 4)$ and $G_{25}(1, 7, 8)$ are isomorphic with $G_{25}(1, 6, 7)$ (using units 4 and 7). \\
If $(r,n) = (10,25)$, we use pattern $(abcd)^6d$.
Graph $G_{25}(1, 9, 10)$ is isomorphic with $G_{25}(1, 10, 11)$ (using unit 9). \\
If $(r,n) = (10,26)$, we use pattern $((abcd)^3d)^2$.
Graph $G_{26}(1, 7, 8)$ is isomorphic with $G_{26}(1, 10, 11)$ (using unit 7). \\
If $(r,n) = (6,33)$, we use pattern $(abcd)^8d$.
Graph $G_{33}(1, 14, 15)$ is isomorphic with $G_{33}(1, 6, 7)$ (using unit 14).  \\
If $(r,n) = (10, 37)$, we use pattern $(abcd)^9d$. 

Hence, we can see that all these graphs are $(0,0,0,1)$-colorable.
Moreover, any of these colorings needs at most two monochromatic edges.
\end{enumerate}

Based on Theorem \ref{YehZhu} and previous discussion we have the following statement.

\begin{theorem}
\label{6-reg}
Every 6-regular toroidal graph except $K_7$ and $T_{11}$ is $(0,0,0,1)$-colorable.
\end{theorem}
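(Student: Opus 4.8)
The plan is to combine Altshuler's characterization (\Cref{Alt}) with Yeh and Zhu's classification (\Cref{YehZhu}) to reduce the problem to a finite and well-structured collection of exceptional graphs. By \Cref{YehZhu}, every $6$-regular toroidal graph is properly $4$-colorable---hence trivially $(0,0,0,0)$-colorable and a fortiori $(0,0,0,1)$-colorable---except for the graphs listed in the five enumerated families. Since a proper $4$-coloring uses four color classes each inducing an edgeless subgraph, the only work remaining is to exhibit, for each exceptional graph other than $K_7$ and $T_{11}$, an explicit $(0,0,0,1)$-coloring, that is, a partition of the vertex set into four classes in which three classes are independent sets and the fourth induces a subgraph of maximum degree at most one.

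I would treat each of the five families in turn, exactly in the order they appear. Family~(1) consists of six graphs of small fixed order, so I would simply display a valid $(0,0,0,1)$-coloring for each by direct inspection (as in \Cref{fig:SmallGraphs}). Family~(2) is eliminated on the grounds that $G[m\times 2,1]$ with $m$ odd is not genuinely $6$-regular once multiplicities are discarded, so it lies outside the class under consideration. Families~(3) and~(4) I would handle together by first showing that the circulant graphs $G_n[1,r,r+1]$ arising in family~(3) (for $n=2r+3,3r+1,3r+2$) are isomorphic to circulants with generating set $\{1,2,3\}$ via the appropriate unit multipliers mod $n$, thereby reducing family~(3) to family~(4); the case $n=2r+2$ is discarded as not $6$-regular. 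For family~(4), $G_n[1,2,3]$, the two sporadic cases $n=7$ and $n=11$ give $K_7$ and $T_{11}$ and are excluded by hypothesis, while for $n>7$, $n\neq 11$, I would supply periodic coloring patterns on the vertices $0,1,\dots,n-1$ depending on the residue of $n$ modulo $4$. The key idea here is that a block such as $abcdd$ inserts a single monochromatic edge (the adjacent pair of $d$'s along a generator of length $1$), and by padding with clean $abcd$ blocks one can tile any length $n$ in the relevant residue class while keeping the $d$-class of maximum degree at most one. Finally, family~(5) is a finite list of sporadic pairs $(r,n)$, which I would dispatch one at a time with explicit patterns, again using isomorphisms of circulants to collapse several entries (e.g.\ the various $G_n[1,r',r'+1]$ isomorphic to a single representative via a unit multiplier).

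The main obstacle, and the part requiring the most care, is verifying correctness of the periodic patterns for family~(4): one must check both that the three classes $a,b,c$ remain independent and that the class $d$ induces no vertex of degree two, across all three generators $\{1,2,3\}$ simultaneously and respecting the cyclic wrap-around of the circulant. This demands confirming that the placement and count of the doubled $d$-symbols is compatible with the period-$4$ structure of the $abcd$ blocks relative to the jumps of size $1,2,3$, and that the boundary between the padding region and the $abcdd$ blocks does not create an unintended monochromatic edge within an independent class or a degree-$2$ vertex in the $d$-class. Once the residue-class patterns are validated (and the sporadic cases of family~(5) checked by hand), invoking \Cref{YehZhu} to cover all remaining $6$-regular toroidal graphs completes the argument, yielding that every $6$-regular toroidal graph other than $K_7$ and $T_{11}$ is $(0,0,0,1)$-colorable.
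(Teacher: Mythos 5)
Your proposal is correct and follows essentially the same route as the paper: invoke Yeh--Zhu's classification to reduce to the exceptional families, dismiss family (2) as not simple $6$-regular, collapse family (3) into family (4) via unit-multiplier isomorphisms of circulants, and finish families (1), (4), (5) with explicit colorings---small cases by inspection and the circulants $G_n[1,2,3]$ by periodic $(abcd)$-patterns padded with $abcdd$ blocks according to $n \bmod 4$. The paper's proof is exactly this, with the concrete patterns (e.g.\ $(abcd)^{\frac{n-5}{4}}abcdd$ and the sporadic family-(5) strings) written out.
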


As a consequence of Theorem~\ref{6-reg} we can state the following:
\begin{theorem}
Every not 5-degenerate toroidal graph $G$ is $(0,0,0,3)$-colorable.
\end{theorem}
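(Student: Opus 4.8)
The plan is to reduce the whole graph to its $6$-core and then color outward. Every toroidal graph $H'$ satisfies $|E(H')|\le 3|V(H')|$ and hence $\mathrm{ad}(H')\le 6$, so any subgraph of $G$ of minimum degree at least $6$ is forced to be $6$-regular. As $G$ is not $5$-degenerate, its $6$-core $H$ (the maximal subgraph of minimum degree $\ge 6$) is nonempty, and by the previous remark $H$ is a $6$-regular toroidal graph. I would first color $H$: if $H\notin\{K_7,T_{11}\}$, then Theorem~\ref{6-reg} gives a $(0,0,0,1)$-coloring of $H$; the graph $K_7$ admits a $(0,0,0,3)$-coloring and $T_{11}$ a $(0,0,0,2)$-coloring. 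In every case $H$ has a $(0,0,0,3)$-coloring $\varphi$ in which colors $1,2,3$ are proper and color $4$ has defect at most $3$ (and only at most $1$ in the generic case).

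Next I would extend $\varphi$ to $B:=V(G)\setminus V(H)$. The $6$-core is obtained by repeatedly deleting vertices of degree at most $5$, which produces an ordering $u_1,\dots,u_t$ of $B$ such that each $u_i$ has at most five neighbors in $V(H)\cup\{u_{i+1},\dots,u_t\}$. I would color $B$ in the reverse order $u_t,u_{t-1},\dots,u_1$, so that when $u_i$ is colored it has at most five already-colored neighbors. For each $u_i$ I assign a color from $\{1,2,3\}$ missing on its colored neighbors whenever one exists, keeping those three classes proper; otherwise all of $1,2,3$ occur among the colored neighbors of $u_i$, so $u_i$ has at least three already-colored neighbors in $V_1\cup V_2\cup V_3$ and therefore at most two in color $4$, and I give $u_i$ color $4$. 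Two structural facts are available here: $\langle B\rangle_G$ contains no subgraph of minimum degree $\ge 6$ (such a subgraph would lie in the $6$-core), so it is $5$-degenerate; and applying the toroidal bound to $G$ yields $|E(H,B)|+|E(\langle B\rangle_G)|\le 3|B|$, i.e.\ the attachment of $B$ is globally sparse.

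The hard part will be bounding the defect of color $4$ by $3$ in the \emph{final} coloring. The greedy rule controls only the back-degree into color $4$ (at most two at the moment of coloring), but a vertex colored early may still acquire many color-$4$ neighbors among vertices colored later; degeneracy alone therefore does not bound $\Delta(\langle V_4\rangle_G)$, as a single vertex could collect arbitrarily many forced color-$4$ neighbors. To overcome this I would exploit the sparsity inequality $|E(H,B)|+|E(\langle B\rangle_G)|\le 3|B|$ rather than pure degeneracy: it forces the color-$4$ vertices inside $B$ to be scarce, and the delicate point is to choose the forced assignments — recoloring locally whenever two adjacent vertices would both be forced — so that color $4$ restricted to $B$ induces an independent set in which each vertex has at most three neighbors in the core class $V_4$, while each core vertex of color $4$ receives at most two new color-$4$ neighbors. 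Combined with the generic core defect $\le 1$, this keeps $\Delta(\langle V_4\rangle_G)\le 3$ and gives the desired $(0,0,0,3)$-coloring; the cases $H=K_7$ and $H=T_{11}$, where the core already uses defect $3$ or $2$, are handled by the same scheme with the $B$-overflow kept off the already saturated core vertices.
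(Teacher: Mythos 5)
Your reduction to the $6$-regular subgraph $H$ is sound and matches the paper's first step, but the extension to $B=V(G)\setminus V(H)$ has a genuine gap --- one you flag yourself. The greedy coloring along a degeneracy ordering only controls back-degrees, so it cannot bound $\Delta(\langle V_4\rangle_G)$, and your proposed repair (the sparsity inequality $|E(H,B)|+|E(\langle B\rangle_G)|\le 3|B|$ plus local recoloring so that color $4$ inside $B$ stays independent, adds at most two new color-$4$ neighbors to each core vertex, and avoids the saturated vertices of $K_7$ or $T_{11}$) is a statement of the desired conclusion, not an argument: a vertex of $B$ adjacent to core vertices colored $1,2,3$ is forced to color $4$, arbitrarily many such forced vertices can share a common neighbor, and no mechanism is given that resolves these conflicts. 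As written, the proof does not go through.

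The missing idea is geometric, and it is exactly what makes the paper's proof short: equality in Euler's formula forces more than $6$-regularity. Since $H$ is toroidal with $\delta(H)\ge 6$, combining $6|V(H)|\le 2|E(H)|$ with $2|E(H)|=\sum_f \deg(f)\ge 3|F(H)|$ and $|V(H)|-|E(H)|+|F(H)|\ge 0$ forces equality throughout, so $H$ is a $6$-regular \emph{triangulation} of the torus (in particular its inherited embedding is cellular). Consequently every vertex and edge of $G$ not in $H$ lies inside some triangular face of $H$, and for each face the interior together with its boundary triangle is planar. Now precolor the triangle by the $(0,0,0,3)$-coloring of $H$ (from Theorem~\ref{6-reg}, or the explicit colorings of $K_7$ and $T_{11}$); identify boundary vertices that received equal colors --- only color $4$ can repeat there, and such vertices are adjacent, so this is an edge contraction preserving planarity --- apply the Four Color Theorem to the contracted planar graph, and permute the four colors to agree with the (now pairwise distinct) boundary colors. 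Undoing the identification yields a proper coloring of the interior in which no interior vertex shares a color with any neighbor on the triangle. Hence the extension creates \emph{no} new monochromatic edges at all: the defect of color $4$ in $G$ equals its defect in $H$, which is at most $3$, and no bookkeeping about overflow onto saturated core vertices is needed. Your outline would become a correct proof if the greedy/sparsity extension were replaced by this face-by-face planar extension.
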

\begin{proof}
As $G$ is not 5-degenerate, it contains an induced subgraph $H$ with $\delta(H) \geq 6$. As $H$ is also toroidal, then (as a consequence of Eulers formula) it is a 6-regular triangulation embedded on torus. From Theorem~\ref{6-reg} it follows that $H$ admits a $(0,0,0,3)$-coloring (as $K_7$ is $(0,0,0,3)$- and $T_{11}$ is $(0,0,0,2)$-colorable). This coloring can be extended to a $(0,0,0,3)$-coloring of the whole graph $G$ since an interior of a triangular face of $H$ is planar, and hence 4-colorable (even with pre-colored vertices of the triangle). 
\end{proof}

It means that if there is a toroidal graph that is not $(0,0,0,3)$-colorable, then it has to be 5-degenerate.

\section{Conclusion and open problems}

In this paper we extend known results about improper colorings of toroidal graphs.
It is already known that every toroidal graph is $7$-colorable with defect $0$ and $3$-colorable with defect $2$. The question whether every toroidal graph is $4$-colorable with defect $1$ (proposed in~\cite{CowGod}) remains open.

We focused on improper colorings in which not all defect values are the same. We proved that toroidal graphs are $(0,0,0,0,0,1^*)$-colorable, $(0,0,0,0,2)$-colorable, $(0,0,0,$ $1^*,1^*)$-colorable and $(0,0,0,4)$-colorable in general. 
All those results except the last one are tight.

Since $K_7$ is not $(0,0,0,2)$-colorable, we strongly believe that the following holds:

\begin{conjecture}
Every toroidal graph is $(0,0,0,3)$-colorable.
\end{conjecture}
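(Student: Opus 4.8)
The plan is to attack the conjecture through a minimal counterexample combined with a discharging argument, leaning on the structural facts already available for the extremal case. Suppose $G$ is a counterexample minimizing $|V(G)|$. First I would record the cheap reducible configurations. A vertex $v$ with $\deg(v)\le 3$ is reducible: any $(0,0,0,3)$-coloring of $G-v$ extends, since the at most three neighbors of $v$ occupy at most three colors, so either a color in $\{1,2,3\}$ is free (giving defect $0$) or all of $1,2,3$ already appear, no neighbor uses color $4$, and $v$ may take color $4$ with defect $0$. Hence the minimal counterexample has minimum degree at least $4$. Vertices of degree $4$ and $5$ are more delicate: once all of $1,2,3$ are blocked on the neighborhood, a degree-$d$ vertex has at most $d-3$ neighbors in color $4$, so assigning color $4$ keeps $v$ within defect $d-3\le 2$, but it may push a neighbor already at defect $3$ up to $4$. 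Building a workable list of reducible configurations around $4$- and $5$-vertices (very likely via local recolorings of the color-$4$ class) is where most of the technical effort will sit.

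Second, I would invoke the reduction the paper already supplies: since every non-$5$-degenerate toroidal graph is $(0,0,0,3)$-colorable, the minimal counterexample $G$ must be $5$-degenerate, so it has no subgraph of minimum degree at least $6$ and in particular is not a $6$-regular triangulation. The genuinely delicate $6$-regular case is thus already disposed of by Theorem~\ref{6-reg}, together with $K_7$ being $(0,0,0,3)$- and $T_{11}$ being $(0,0,0,2)$-colorable. Consequently the conjecture reduces to graphs that are \emph{almost} $6$-regular yet contain a $5^-$-vertex, and the hope is that each such vertex, or a small configuration anchored at it, can be shown reducible.

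Third comes the discharging. Assign each vertex $x$ the charge $\deg(x)-6$ and each face $f$ the charge $2\deg(f)-6$; by Euler's formula for the torus, $|V|-|E|+|F|=0$, the total charge is exactly $0$. This vanishing total is the crux of the whole difficulty: unlike the planar case there is no positive global surplus to redistribute into a contradiction, so a successful argument must show that the presence of reducible configurations forces the redistributed total to be strictly positive, contradicting $0$. The obstruction is sharpest precisely for triangulations in which every vertex has degree exactly $6$, where all charges vanish simultaneously — exactly the graphs Theorem~\ref{6-reg} removes, which is why the $5$-degeneracy reduction above is indispensable.

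I expect the decisive obstacle to be controlling the defect-$3$ color class throughout the reductions. Because $\Delta(\langle V_4\rangle_G)\le 3$ is a constraint on an entire color class rather than a local condition, deleting and recoloring one vertex can cascade: promoting a vertex into color $4$ may overload a neighbor, whose repair overloads a further vertex, and so on. A promising route around this is to begin from the $(0,0,0,4)$-coloring guaranteed by Theorem~\ref{0004} and treat $\langle V_4\rangle_G$ — a toroidal graph of maximum degree at most $4$ — as the object to be improved, recoloring its $4$-vertices into $\{1,2,3\}$ by augmenting or Kempe-type exchanges so as to drop its maximum degree to $3$. Proving that such exchanges terminate, that is, exhibiting a potential that strictly decreases and ruling out cycling, is, I anticipate, the hardest and most essential step.
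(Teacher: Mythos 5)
Your text is a research outline, not a proof: the statement you were asked to prove is precisely the paper's open conjecture (the paper proves only $(0,0,0,4)$-colorability in general, Theorem~\ref{0004}, and $(0,0,0,3)$-colorability for non-5-degenerate graphs via Theorem~\ref{6-reg}), and every step of your plan that would carry actual mathematical weight is deferred rather than carried out. Concretely: (a) the only configuration you genuinely prove reducible is a $3^-$-vertex; the reducible configurations around $4$- and $5$-vertices, which you yourself identify as the core of the argument, are never exhibited, and the difficulty there is real --- assigning color $4$ to such a vertex can push a neighbor's defect from $3$ to $4$, and the repair cascade you describe comes with no termination argument. (b) The discharging step cannot even be set in motion. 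With charges $\deg(x)-6$ and $2\deg(f)-6$ the total on the torus is exactly $0$, so a contradiction requires showing that after redistribution every element has nonnegative charge \emph{and} some element is strictly positive; this needs explicit discharging rules tied to a complete list of unavoidable reducible configurations, and you supply neither. Worse, a $5$-degenerate toroidal graph can be a triangulation in which all but two vertices have degree $6$ (one vertex of degree $5$, one of degree $7$): the entire charge imbalance is then confined to a bounded number of elements no matter how large the graph is, so the ``presence of reducible configurations forces strictly positive total charge'' mechanism you hope for has no obvious local source.

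(c) Your fallback route --- start from the $(0,0,0,4)$-coloring of Theorem~\ref{0004} and improve the class $V_4$ by Kempe-type exchanges --- is likewise only stated as a hope: no potential function is defined, no argument excludes cycling, and Kempe-chain arguments are notoriously more fragile for defective colorings than for proper ones, because swapping colors along a chain can create new monochromatic edges on the boundary of the chain, exactly the cascading you flag. The parts you do get right (minimum degree at least $4$ in a minimal counterexample, via the correct extension argument; the reduction to $5$-degenerate graphs; the observation that zero total charge is the structural obstruction) are consistent with what the paper establishes, but the paper stops at exactly that point and records the statement as a conjecture. As it stands, there is no proof here to validate, and nothing in your outline indicates how the genuinely open steps would be closed.
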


While attempting to lower the constant $4$ in the before mentioned result, we proved that toroidal graphs are $(0,1,2,2)$-colorable.
However, we do not know if this result is optimal in the sense that lowering one of the values will not affect the other ones.
%zamyslenie ci nevieme rychlo a=3, pozriet v partitionoch
\begin{question}
What are optimal $a,b$, such that every toroidal graph is $(0,1,a,b)$-colorable?
\end{question}
%\begin{question}
%What is optimal $a$, such that every toroidal graph is $(0,1,1,a)$-colorable?
%\end{question}
%zamysliet sa co vieme o tych a, b zhora povedat
\begin{question}
What are optimal $a,b$, such that every toroidal graph is $(0,0,a,b)$-colorable?
\end{question}

%In the further research, it is also possible 
Possible direction for the further investigation is to study improper colorings of graphs embeddable on other surfaces. 
For example, graphs embeddable on $\mathbb{N}_1$ (projective planar graphs) are properly $6$-colorable (see~\cite{Hea}). Using the shortest non-contractible cycle we can easily show that such graphs are $(0,0,0,0,1)$- and $(0,0,0,2)$-colorable.
%If we decrease the number of colors, at least one of them needs to have defect value greater than $0$.
%With approach similar to the one used on $4$-colorability of toroidal graphs, we can show these graphs are $(0,0,0,0,1)$-colorable.

In Table~\ref{tab} we sum up already known results about improper colorings (including those proved in this paper).
Rows correspond to surfaces on which graphs are embeddable, in columns we give the results based on the number of colors used in particular improper coloring.
%In the columns we give number of colors used in particular improper coloring.
By $\dagger$ we denote the results that do not need to be optimal.
Recall that planar graphs are not $(k_1, k_2)$-colorable for any positive integers $k_1, k_2$~\cite{Cow} (therefore the same holds for graphs embeddable on surfaces with higher Euler genus).
%Note that there is no column for two colors, since planar graphs are not $(k_1, k_2)$-colorable for any positive integers $k_1, k_2$~\cite{Cow} (therefore the same holds for graphs embeddable on surfaces with higher Euler genus).
%In the table we also mention results for graphs embeddable on $\mathbb{N}_2$, without separate proofs.

\begin{table}[h!]
\centering
\fontsize{8}{11.0}\selectfont{
\begin{tabular}{|>{\centering}m{0.5in}||>{\centering}m{0.85in}|>{\centering}m{0.85in}|>{\centering}m{0.85in}|>{\centering}m{0.98in}|c|}
    \hline
    & $k=7$ & $k=6$ & $k=5$ & $k=4$ & $k=3$  \\
    \hline
    $\mathbb{S}_0$ & $-$ & $-$ & $-$ & $(0,0,0,0)$~\cite{AppHak} & $(2,2,2)$~\cite{Cow}  \\
    \hline
    $\mathbb{S}_1$ & $(0,0,0,0,0,0,0)$ & $(0,0,0,0,0,1^*)$ & \shortstack{\\{$(0,0,0,0,2)$}\\{$(0,0,0,1^*,1^*)$}} & \shortstack{\\{$(0,0,0,4)^\dagger$}\\{$(0,1,2,2)^\dagger$}} & $(2,2,2)$~\cite{CowGodJes}  \\
    \hline	
	\shortstack{{$\mathcal{S}$} \\ {${\rm eg}(\mathcal{S})>2$}} & & &  & $(0,0,0,9{\rm eg}-4)^\dagger$~\cite{ChoEsp} & $(2,2,9{\rm eg}-4)^\dagger$~\cite{ChoEsp} \\
    \hline	
\end{tabular}
}
\caption{Defect lists for improper $k$-colorings of graphs embeddable on $\mathcal{S}$.}
\label{tab}
%\end{center}
\end{table}

\paragraph{Acknowledgement.} 
Authors were supported by the Slovak Research and Development Agency under the contracts No. APVV--19--0153 and APVV--23--0191.
D.~\v{S}vecov\'{a} was also supported by VVGS-2025-3512.

%%%%%%%%%%%%%%%%%%%%%%%%%%%%%%%%%%%%%%%%%%%%%%%%%%%%%%%%%%%%%%%%%%%%%%%%%%%%%
\bibliographystyle{plain}
{
	\bibliography{References}
}

\end{document}